\documentclass{amsart}

\usepackage{enumerate}
\usepackage{amsmath}
\usepackage{mathrsfs}
\usepackage{color}
\usepackage{amssymb}

\usepackage[colorlinks, linkcolor=blue, citecolor=blue]{hyperref}

%[colorlinks,linkcolor=red,anchorcolor=blue,citecolor=green]

\newtheorem{theorem}{Theorem}[section]
\newtheorem{lemma}[theorem]{Lemma}
\newtheorem{corollary}[theorem]{Corollary}

\newtheorem{remark}[theorem]{Remark}
\newtheorem{definition}[theorem]{Definition}

\makeatletter
\@addtoreset{equation}{section}
\makeatother

\begin{document}
\title[Noncommutative 2-Co-lacunary sequences]{ 2-Co-lacunary sequences in noncommutative symmetric Banach spaces}

\author[F. Sukochev]{Fedor Sukochev}
\address{Fedor Sukochev \\ School of Mathematics and Statistics,
 University of New South Wales, Kensington 2052, Australia}
\email{f.sukochev@unsw.edu.au}

\author[D. Zhou]{Dejian Zhou$^*$}
\address{Dejian Zhou \\ School of Mathematics and Statistics,
Central South University, Changsha 410083, China}
\email{zhoudejian@csu.edu.cn}

\keywords{$2$ co-lacunary sequences, noncommutative symmetric spaces, noncommutative Khintchine inequality}
\subjclass[2010]{Primary 46L52; Secondary 46L53,46E30}

\thanks{$*$ corresponding author.}

\begin{abstract}
We characterize noncommutative symmetric Banach spaces for which every bounded sequence admits either  a convergent subsequence, or a $2$-co-lacunary subsequence.
This extends the classical characterization, due to R\"abiger.
\end{abstract}

\maketitle

\section{Introduction}

Let $(X,\|\cdot\|_X)$ be a Banach space. A sequence $(x_n)_{n\geq1}$ in $X$ is said to be $2$-co-lacunary if there is a constant $C>0$ such that for any finite sequence $(\lambda_n)_{n\geq1}$ of complex numbers,
\begin{equation}\label{2 co-lacu}
\Big(\sum_{n\geq1} |\lambda_n|^2\Big)^{1/2}\leq C \Big\|\sum_{n\geq1}\lambda_n x_n\Big\|_X.
\end{equation}
The following remarkable result about $2$-co-lacunary subsequences was proved by Aldous and Fremlin \cite[Theorem 6]{AlFr1982}.
Let  $(\Omega,\mathcal{F},P)$  be a probability space. If $(x_n)_{n\geq1}$ is a bounded sequence in $L_1(\Omega)$, then either $(x_n)_{n\geq1}$ admits a convergent subsequence or a $2$-co-lacunary subsequence.
 R\"{a}biger \cite{Rabi1991} showed that for any Banach lattice $E$, the following properties are equivalent.
\begin{enumerate}
 \item Every bounded sequence in $E$ has a subsequence which is either convergent in norm, or is $2$-co-lacunary.
 \item Every semi-normalized disjoint sequence in $E$ has a subsequence which is $2$-co-lacunary.
\end{enumerate}

 A sequence $(x_n)_{n\ge 1}$ in $E$ is said to be semi-normalized if $\inf_{n\ge 1}\|x_n\|_E>0$ and $\sup_{n\ge 1}\|{x_n}\|_E<\infty$.

Here we are interested in the extension of such results to noncommutative symmetric spaces, such as those studied in \cite{Kalton2008}.
We now state our first main result below.

\begin{theorem}\label{2-main-result}
	Suppose that $E$ is an order continuous  symmetric Banach function space such that $ E\subset (L_1+L_2)(0,\infty).$ Let $\mathcal{M}$ be a hyperfinite and semifinite von Neumann algebra acting on a separable Hilbert space $H$, equipped with a faithful normal semifinite trace $\tau$, and let $E(\mathcal M)$ be the symmetric space associated to $(\mathcal M,\tau)$, and $E$.

Then the following assertions are equivalent:
\begin{enumerate}[{\rm(i)}]
\item  Every bounded sequence in $E(\mathcal{M})$ admits either a convergent subsequence or a $2$-co-lacunary subsequence.
\item  Every sequence of  pairwise orthogonal elements  in $E(\mathcal{M})$ either converges to zero or contains a $2$-co-lacunary subsequence.
 \end{enumerate}
\end{theorem}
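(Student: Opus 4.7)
For (i) $\Rightarrow$ (ii), let $(x_n)$ be a pairwise orthogonal sequence in $E(\mathcal{M})$ which fails to converge to zero. After extracting a subsequence we may assume $\inf_n\|x_n\|_{E(\mathcal{M})} \geq \delta > 0$. Apply (i) to obtain a further subsequence which is either norm-convergent or $2$-co-lacunary; it suffices to rule out the former. Let $p_n$ denote the left support of $x_n$. Pairwise orthogonality forces $p_n x_m = 0$ whenever $n \neq m$, so if $x_n \to x$ in norm then for each fixed $n$, $p_n x = \lim_{m \to \infty} p_n x_m = 0$. Since the symmetric norm on $E(\mathcal{M})$ is an ideal norm with respect to left multiplication by projections,
\[
\|x_n\|_{E(\mathcal{M})} = \|p_n x_n\|_{E(\mathcal{M})} = \|p_n(x_n - x)\|_{E(\mathcal{M})} \leq \|x_n - x\|_{E(\mathcal{M})} \to 0,
\]
contradicting seminormalization.

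For (ii) $\Rightarrow$ (i), let $(x_n)\subset E(\mathcal{M})$ be bounded without a norm-convergent subsequence. My plan is to prove and then apply a noncommutative subsequence-splitting lemma: after extraction, one decomposes
\[
x_{n_k} = y_k + z_k,
\]
where $(y_k)$ is a pairwise orthogonal sequence in $E(\mathcal{M})$ and, after further thinning, $\sum_k \|z_k\|_{E(\mathcal{M})}^2$ is arbitrarily small. Failure of relative norm compactness of $(x_{n_k})$ transfers to the orthogonal part $(y_k)$, which is thus seminormalized along a subsequence; hypothesis (ii) furnishes a $2$-co-lacunary subsequence $(y_{k_j})$ with constant $C$. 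Arranging $\sum_j \|z_{k_j}\|_{E(\mathcal{M})}^2 < (2C)^{-2}$, the Cauchy--Schwarz estimate
\[
\Big\|\sum_j \lambda_j z_{k_j}\Big\|_{E(\mathcal{M})} \leq \Big(\sum_j \|z_{k_j}\|_{E(\mathcal{M})}^2\Big)^{1/2}\Big(\sum_j |\lambda_j|^2\Big)^{1/2},
\]
combined with the triangle inequality, shows that $(y_{k_j}+z_{k_j}) = (x_{n_{k_j}})$ is $2$-co-lacunary with constant $2C$.

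The main obstacle is establishing the subsequence-splitting lemma, which is strictly more delicate than its Banach lattice counterpart because order-theoretic disjointification is unavailable. My approach would exploit the hyperfiniteness of $\mathcal{M}$ and the separability of $H$ to realize each $x_n$ approximately inside a finite-dimensional matrix subfactor, then perform a combinatorial block disjointification across these approximants while keeping residuals summable in $E(\mathcal{M})$. The inclusion $E \subset L_1 + L_2$ is what makes spectral truncations compatible with the symmetric norm, and the noncommutative Khintchine inequality is the natural device for handling the row/column asymmetry inherent in passing from pairwise orthogonality to a two-sided $2$-co-lacunary lower bound.
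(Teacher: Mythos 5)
Your argument for (i) $\Rightarrow$ (ii) is correct and is essentially the argument in the paper: a pairwise orthogonal sequence can only converge in norm to zero, so seminormalization rules out the convergent alternative.

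The direction (ii) $\Rightarrow$ (i) has a genuine gap: the ``subsequence-splitting lemma'' you propose, namely that after extraction one can write $x_{n_k}=y_k+z_k$ with $(y_k)$ pairwise orthogonal and $\sum_k\|z_k\|_{E(\mathcal M)}^2$ arbitrarily small, is false, and no amount of hyperfiniteness or block disjointification will rescue it. Take $\mathcal M=L_\infty(0,1)$ and $x_n=r_n$ the Rademacher functions, viewed in $L_1(0,1)\subset (L_1+L_2)$. This sequence is bounded, has no norm-convergent subsequence, and is equi-integrable. If your splitting held, the disjoint part $(y_k)$ would be seminormalized and hence (being disjoint in $L_1$) equivalent to the unit vector basis of $\ell_1$, so $\bigl\|\sum_j\lambda_j x_{n_{k_j}}\bigr\|_1\gtrsim \sum_j|\lambda_j|-\bigl(\sum_j|\lambda_j|^2\bigr)^{1/2}\bigl(\sum_j\|z_{k_j}\|_1^2\bigr)^{1/2}$, contradicting the Khintchine inequality $\bigl\|\sum_j\lambda_j r_{n_j}\bigr\|_1\approx\bigl(\sum_j|\lambda_j|^2\bigr)^{1/2}$. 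The correct splitting principle (Kadec--Pe{\l}czy\'nski/Rosenthal and its noncommutative versions) produces a disjoint part \emph{plus an equi-integrable part that is not small}, and it is precisely the equi-integrable, non-norm-compact case --- where orthogonality gives no information --- that carries all the difficulty. Your proposal silently assumes this case away.

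The paper handles that case by an entirely different mechanism: it first disposes of sequences that are null in $(L_1+L_2)(\mathcal M)$ (these are null in measure, and a Kadec--Klee type result of Chilin--Dodds--Sukochev produces a two-sided disjointification $p_kx_{n_k}q_k$ to which hypothesis (ii) applies); for sequences not null in $(L_1+L_2)(\mathcal M)$ it invokes the Junge--Sukochev--Zanin isomorphic embedding of $(L_1+L_2)(\mathcal R\bar\otimes B(H))$ into $L_1(\mathcal R)$ to reduce to the Parcet--Randrianantoanina $L_1$-dichotomy, which already contains the martingale/Khintchine work needed for the equi-integrable part. A further point you do not address: when the subsequence converges in $(L_1+L_2)(\mathcal M)$ to some $x$, one must show $x\in E(\mathcal M)$ before subtracting it off; the paper derives the Fatou property of $E$ from hypothesis (ii) itself (via the fact that no disjoint sequence spans $c_0$) to justify this. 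Your plan would need both of these ingredients, or substitutes for them, to close the argument.
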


A sequence of operators $(x_n)_{n\ge 1}\subset E(\mathcal{M})$  is said to be pairwise orthogonal if, for all $m\ne n,$ we have $x_nx_m=x_n^{\ast}x_m=0.$
In other words, our result extends R\"abiger's characterization to symmetric spaces associated with semifinite and hyperfinite von Neumann algebras. Moreover, if $E(\mathcal{M})=L_1(\mathcal{M}),$  then Theorem \ref{2-main-result} goes back to \cite[Corollary 3.7]{Rand2006}.
The proof of Theorem \ref{2-main-result}, given in Section \ref{sec-32},  heavily depends on the recent result \cite[Lemma~38]{JSZ}.

The next theorem is our second main result. See Definition \ref{def equi} for the notion  of $E$-equi-integrability.

\begin{theorem}\label{main theorem 1}
Let $E$ be an order continuous symmetric Banach function space such that $E\subset (L_1+L_2)(0,\infty).$ Suppose that $\mathcal{M}$ is a semifinite von Neumann algebra, and $(x_n)_{n\geq1}$ is a martingale difference sequence in $E(\mathcal{M})$ such that
\begin{enumerate}[{\rm (i)}]
  \item \label{con1} $A=\inf\{\|x_n\|_{E(\mathcal{M})}:\ n\geq1\}>0$;
  \item \label{con2} $(x_n)_{n\geq1}$ is $E$-equi-integrable in $E(\mathcal{M})$.
\end{enumerate}
Then the martingale difference sequence $(x_n)_{n\geq1}$ is  $2$-co-lacunary in $E(\mathcal{M})$.
\end{theorem}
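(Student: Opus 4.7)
My plan is to combine the noncommutative martingale square function inequality in $E(\mathcal{M})$ (available because $E$ is order continuous with $E\subset L_1+L_2$) with an ``exchange'' argument that uses $E$-equi-integrability to pass from the $E(\mathcal{M})$-norm of the relevant square functions to an $L_2$-quadratic quantity. The martingale difference hypothesis enters twice: first through the square function lower bound, and second through $L_2$-orthogonality of martingale differences.

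The first step is to invoke the noncommutative Burkholder--Gundy / conditioned Khintchine inequality for martingale differences in $E(\mathcal{M})$ (Junge--Xu / Randrianantoanina type). For $E$ order continuous with $E\subset (L_1+L_2)(0,\infty)$, this yields a constant $c_1>0$ such that
$$\Big\|\sum_n \lambda_n x_n\Big\|_{E(\mathcal{M})} \ge c_1 \max\Big(\Big\|\Big(\sum_n |\lambda_n|^2 x_n^* x_n\Big)^{1/2}\Big\|_{E(\mathcal{M})},\; \Big\|\Big(\sum_n |\lambda_n|^2 x_n x_n^*\Big)^{1/2}\Big\|_{E(\mathcal{M})}\Big).$$

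The second step is to use the $E$-equi-integrability hypothesis (Definition~\ref{def equi}) to dominate the $E(\mathcal{M})$-norm of the column/row square function by a quadratic expression in $L_2(\mathcal{M})$-norms,
$$\Big\|\Big(\sum_n |\lambda_n|^2 x_n^* x_n\Big)^{1/2}\Big\|_{E(\mathcal{M})}^{2} \gtrsim \sum_n |\lambda_n|^2 \|x_n\|_{L_2(\mathcal{M})}^2,$$
the intuition being that equi-integrability rules out $\ell_1$-type ``spike'' behaviour and forces the sequence to sit in the region where the $E(\mathcal{M})$-norm controls an $L_2$ quantity from below. Combining the hypothesis $\inf_n \|x_n\|_{E(\mathcal{M})}\ge A>0$ with equi-integrability and the inclusion $E\subset L_1+L_2$, one then deduces $\inf_n \|x_n\|_{L_2(\mathcal{M})}\ge c_2 > 0$. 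Chaining the three estimates produces
$$\Big\|\sum_n \lambda_n x_n\Big\|_{E(\mathcal{M})} \ge C \Big(\sum_n |\lambda_n|^2\Big)^{1/2},$$
which is exactly the defining inequality \eqref{2 co-lacu} of a $2$-co-lacunary sequence.

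The main obstacle is the second step. In general the $E(\mathcal{M})$-norm does not dominate the $L_2(\mathcal{M})$-norm from below (already for $E=L_1$), so the argument must use $E$-equi-integrability together with martingale orthogonality in a rather delicate way. I expect this is where a noncommutative Kadec--Pel\-czy\'n\-ski type decomposition, together with tools such as \cite[Lemma~38]{JSZ}, becomes indispensable, and where the bulk of the technical work of the proof will be located.
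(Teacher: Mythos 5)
Your proposal has two genuine gaps, and they sit exactly at the two places where the real work happens. First, the square-function lower bound you invoke in step one is not available in the generality you need: the ``max of row and column'' form of the Burkholder--Gundy lower estimate holds for $L_p$ with $p\ge 2$, while for $1<p<2$ the correct statement involves an \emph{infimum} over decompositions $x_n=y_n+z_n$, and for $E$ close to $L_1$ (which is permitted here, since the only hypothesis is $E\subset (L_1+L_2)(0,\infty)$) even the infimum form of a lower square-function estimate for the plain $L_1$-norm fails --- that is precisely why the paper has to descend to the quasi-Banach scale and use Randrianantoanina's weak type $(1,1)$ martingale transform inequality (Lemma~\ref{CR lemma}) rather than any Burkholder--Gundy inequality in $E(\mathcal{M})$ itself.

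Second, and more fundamentally, the reduction to $L_2(\mathcal{M})$-norms in your step two cannot work: an element of $E(\mathcal{M})\subset (L_1+L_2)(\mathcal{M})$ need not belong to $L_2(\mathcal{M})$ at all, so the quantities $\|x_n\|_{L_2(\mathcal{M})}$ may be infinite and the claimed inequality $\|(\sum_n|\lambda_n|^2x_n^*x_n)^{1/2}\|_{E(\mathcal{M})}^2\gtrsim \sum_n|\lambda_n|^2\|x_n\|_{L_2}^2$ is then vacuously false. Equi-integrability does not repair this; it controls tails from above and is of no help in bounding an $E$-norm from below by an $L_2$-quantity. The paper's route goes in the opposite direction along the scale: fix $0<p<1$ and work in the \emph{larger} space $(L_p+L_2)(\mathcal{M})\supset E(\mathcal{M})$, which is $2$-concave (Lemma~\ref{cor-2-concave}, via cotype $2$ of $L_w$ and the noncommutative Khintchine inequality in $L_p+L_2$). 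Combining this with the weak $(1,1)$ martingale transform bound and the embedding $L_{1,\infty}\subset L_p+L_2$ yields Lemma~\ref{martingale lemma}:
\begin{equation*}
\Big(\sum_{k}\|x_k\|_{(L_p+L_2)(\mathcal{M})}^2\Big)^{1/2}\lesssim \Big\|\sum_k x_k\Big\|_{(L_1+L_2)(\mathcal{M})}\le \Big\|\sum_k x_k\Big\|_{E(\mathcal{M})}.
\end{equation*}
The only role of $E$-equi-integrability is then to guarantee $\delta=\inf_n\|x_n\|_{(L_p+L_2)(\mathcal{M})}>0$ (via Lemma~\ref{e imply com}: a subsequence tending to $0$ in $L_p+L_2$ tends to $0$ in measure, hence in $E(\mathcal{M})$, contradicting $A>0$), after which $\delta(\sum|\lambda_n|^2)^{1/2}\le(\sum\|\lambda_nx_n\|_{(L_p+L_2)}^2)^{1/2}$ closes the argument. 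So the correct mechanism replaces your hoped-for $L_2$-orthogonality by $2$-concavity of a quasi-Banach interpolation space below $L_1$; without that substitution the proposal does not go through.
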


It was proved by Akemann \cite[Theorem II.2]{Ak1967} (see also \cite[Page 149, Theorem 5.4]{Tak1979I}) that for $K\subset L_1(\mathcal{M})$, $K$ is  $L_1$-equi-integrable  if and only if $K$ is relatively weakly compact. Then, if $E(\mathcal{M})=L_1(\mathcal{M})$ in the above theorem, Theorem \ref{main theorem 1} is just \cite[Theorem 3.6]{Rand2006}.

In \cite{Rand2006}, to prove Theorem \ref{2-main-result} for $E=L_1$ (see Corollary 3.7 there),  Parcet and Randrianantoanina firstly proved \cite[Theorem 3.6]{Rand2006}.
At the end of \cite{Rand2006}, a simple proof of \cite[Theorem 3.6]{Rand2006} due to Pisier, was given for finite von Neumann algebras.

Pisier's argument mainly depends on the fact that the spaces $L_p(\mathcal{M})$, $0<p<1$,  are of cotype $2$,  and the application of the weak $(1,1)$ inequality  for martingale transformations, established in \cite{Rand2002}.
%It is of note that R\"abiger's proof is dependent upon realizing that a Banach lattice contains an ideal order isometric to $L_1(\Omega, \Sigma, \mu)$, for some probability space.
%The finiteness condition would then seem to be crucial to these techniques.
As mentioned by the authors of \cite[p.~251]{Rand2006}, it  was unknown that if one could extend Pisier's argument to semifinite von Neumann algebra $\mathcal{M}$.
Our method of proof for Theorem~\ref{2-main-result} is completely distinct from that of \cite[Theorem~3.6]{Rand2006}, and may be seen as a development of Pisier's ideas.

Preliminaries are given in Section \ref{sec-2}. Sections \ref{sec-32} and \ref{sec-31} are devoted to the proofs of Theorems \ref{2-main-result} and \ref{main theorem 1}, respectively.
At the end of Section \ref{sec-32}, we give some concluding remarks and an example demonstrating that Theorem \ref{2-main-result} holds true for a concrete Orlicz function space.

 Throughout this paper, we write $A\lesssim B$ if there is a constant $c>0$ such that $A\leq c B.$ We write $A\approx B$ if both $A \lesssim B $ and $B\lesssim A$ hold, possibly with different constants.

\section{Preliminaries}\label{sec-2}

\subsection{Noncommutative symmetric spaces}\label{sec-21}
Throughout the paper, $\mathcal M$ is  a  semifinite von Neumann algebra  equipped with a distinguished   faithful normal semifinite trace $\tau$.  Assume that $\mathcal{M}$ is acting on a Hilbert space $H$. A closed densely defined operator $x$ on $H$ is said to be  affiliated with $\mathcal{M}$ if $x$ commutes with the commutant $\mathcal{M}'$ of $\mathcal{M}$. If $a$ is self-adjoint and if $a=\int_{\mathbb{R}}s de_s^a$ is its spectral decomposition, then for any Borel subset $B\subseteq \mathbb{R}$, we denote by $\chi_B(a)$ the corresponding spectral projection $\int_{\mathbb{R}}\chi_B(s)de_s^a$. An operator $x$ affiliated with $\mathcal{M}$ is called $\tau$-measurable if there exists $s>0$ such that
$\tau(\chi_{(s,\infty)}(|x|))<\infty$ (see e.g. \cite[Definition 1.2]{Fa1986}).
Denote by $\mathcal{S}(\mathcal{M})$ the topological $*$-algebra of all $\tau$-measurable operators.

For $x\in \mathcal{S}(\mathcal{M})$, the generalized singular-valued function $\mu(x)$ is defined by
$$\mu_t(x)=\inf\{s>0:\tau\big(\chi_{(s,\infty)}(|x|)\big)\leq t\},\quad t>0.$$
The function $t\mapsto \mu_t(x)$ is decreasing and right-continuous. For more  detailed study of the generalized singular-value function, see for example \cite{Fa1986}. If $\mathcal M=L_\infty(0,\infty)$ is the abelian von Neumann algebra, then, for a measurable function $f$, the function  $\mu(f)$ is just the decreasing rearrangement of $f$ (see \cite[Page 39]{Be1988}).

Let $L_0(0,\infty)$  denote the set of all equivalence classes of Lebesgue measurable functions on $(0,\infty)$. A Banach (or quasi-Banach) function space $(E,\|\cdot\|_E)$ on the interval $(0,\infty)$ is called symmetric if, for every $g\in E$ and for every measurable function $f\in L_0(0,\infty)$ with $\mu(f)\leq \mu(g)$, we have $f\in E$ and $\|f\|_E\leq \|g\|_E$.
We say $E(0,\infty)$ is order continuous if $\|x_\beta\|\downarrow 0$ whenever $0\leq x_\beta \downarrow 0\subset E(0,\infty)$. $E$ is order continuous if and only if it is separable \cite{CS1994}.

Following \cite{Kalton2008}, for a given symmetric Banach (or quasi-Banach) function space $(E,\|\cdot\|_E)$ , we define the corresponding noncommutative space on $(\mathcal {M},\tau)$ by setting
$$E(\mathcal{M}):=\{x\in \mathcal{S}(\mathcal{M} ):\mu(x)\in E\}.$$
The associated quasi-norm is
\begin{equation}\label{def-sp}
\|x\|_{E(\mathcal M)}=\|\mu(x)\|_E.
\end{equation}
It is shown in \cite{Kalton2008} that if $E(0,\infty)$ is a symmetric Banach space, then $E(\mathcal{M})$ is Banach. This result is extended to quasi-Banach spaces in \cite{Su2014}.
That is it is established in \cite{Su2014} that  if $E(0,\infty)$ is a symmetric quasi-Banach space, then $E(\mathcal{M})$ is quasi-Banach.

 A useful quasi-Banach space is the weak space $L_{1,\infty}(0,\infty)$ defined by
$$L_{1,\infty}(0,\infty)=\{f\in L_0(0,\infty): \|f\|_{L_{1,\infty}}<\infty\},$$
where $\|f\|_{L_{1,\infty}}=\sup_{t>0}t\mu_t(f)$. Then $L_{1,\infty}(\mathcal{M})$ can be defined according to \eqref{def-sp}.

As usual, for $0< p,q\leq \infty$, $(L_p+L_q)(0,\infty)$ is the sum of the quasi-Banach spaces $L_p(0,\infty)$ and $L_q(0,\infty)$. Here, the quasi-norm is given by the formula
$$\|f\|_{L_p+L_q}=\inf\{\|g\|_p+\|g\|_q:\,f=g+h\}.$$
The space $(L_p\cap L_q)(0,\infty)$ is the intersection  of the quasi-Banach spaces $L_p(0,\infty)$ and $L_q(0,\infty)$. Here, the quasi-norm is given by the formula
$$\|f\|_{L_p\cap L_q}=\max\{\|f\|_p,\|f\|_q\}.$$
According to \cite[Theorem 4.1]{Holmstedt}, for $0< p<q<\infty$, we have
\begin{equation}\label{Hol-1}
\|f\|_{L_p+L_q}\approx \Big(\int_0^1\mu_t(f)^pdt\Big)^{1/p}+
\Big(\int_1^\infty\mu_t(f)^qdt\Big)^{1/q}.
\end{equation}

The spaces $(L_p+L_q)(\mathcal M)$ and $(L_p\cap L_q)(\mathcal M)$ are defined by \eqref{def-sp}.
For the case where $E(0,\infty)$ is a symmetric Banach function space, the inclusions
\begin{equation}\label{inclusions}
(L_1\cap L_\infty)(\mathcal M)\subset E(\mathcal M)\subset (L_1+L_\infty)(\mathcal M)
\end{equation}
hold with the continuous embeddings.

Let $x,y\in (L_{1}+L_\infty)(\mathcal{M})$.  The operator $y$ is said to be submajorized by $x$, denoted by $y \prec \prec x$, if for all $t\geq 0$,
\begin{equation}\label{submajor}
\int_0^t \mu_s(y)ds \leq \int_0^t  \mu_s(x) ds.
\end{equation}
This definition is taken from \cite[Definition 3.3.1]{LSZ2013}, and we also refer the reader to \cite[Chapter 3.3]{LSZ2013} for more information.
 We say that $E(\mathcal{M})$ is fully symmetric if  $y\in E(\mathcal{M})$ and $\mu(x)\prec \prec \mu(y)$ imply $x\in E(\mathcal{M})$ and $\|x\|_{E(\mathcal{M})}\leq \|y\|_{E(\mathcal{M})}$.
We say that $E(\mathcal{M})$ has the Fatou property if $(x_n)_{n\geq1}\subset E(\mathcal{M})$ and $x\in \mathcal{S}(\mathcal{M})$ such that $x_n\to x$ for the measure topology, then $x\in  E(\mathcal{M})$ and $\|x\|_{ E(\mathcal{M})}\leq \liminf_{n\to\infty}\|x_n\|_{ E(\mathcal{M})}$.

\begin{remark}\label{rem-fully}
If $E(0,\infty)$ is an order continuous symmetric Banach function space, then $E(0,\infty)$  is fully symmetric (see e.g. \cite{DPPS2011}, \cite[Page 112]{DDS2014}, or \cite{KPS1982}). Hence, $E(\mathcal{M})$ is fully symmetric (\cite[Page 245]{DoPa2014}).

If $E(0,\infty)$  has the  Fatou property, then $E(\mathcal{M})$ also has the  Fatou property (see \cite[Theorem 54(iii)]{DoPa2014}).
\end{remark}

We need the following definition   introduced by Randrianantoanina \cite{Rand2002JOT}. We also point out that related notions had been considered earlier in \cite{CS1994}, and were studied extensively in \cite{DoPaS2016}, \cite{Rand2003}, \cite{RX2003} and \cite{SX2003}.

\begin{definition}[{\cite[Definition 2.5]{Rand2002JOT} and \cite[Definition 3.3]{DDS2007}}]\label{def equi}
 Let $E(0,\infty)$ be a symmetric quasi-Banach function space  and $K$ be a bounded subset of $E(\mathcal{M})$. We will say that $K$ is $E$-equi-integrable if for every decreasing sequence $(e_n)_{n\geq1}$ of projections with $e_n\downarrow 0$,
$$\lim_{n\to \infty }\sup_{x\in K}\|e_nxe_n\|_{E(\mathcal{M})}=0.$$
\end{definition}

The following lemma is taken from \cite{DoPaS2016} (see also \cite{DDS2007} and \cite{Rand2002JOT}).

\begin{lemma}[{\cite[Theorem 3.4]{DoPaS2016}}] \label{e imply com}
Let $E(0,\infty)$ be an order continuous symmetric Banach function space.
If $(x_n)_{n\geq1} \subset E(\mathcal{M})$ is bounded and $E$-equi-integrable, then
$\|x_n\|_{E(\mathcal{M})}\to 0$ if and only if $x_n\to 0$ in measure topology.
\end{lemma}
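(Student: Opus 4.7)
The forward direction, that $\|x_n\|_{E(\mathcal M)} \to 0$ implies $x_n \to 0$ in measure, is routine: the continuous embedding $E(\mathcal M) \subset (L_1+L_\infty)(\mathcal M)$ from \eqref{inclusions}, together with the Holmstedt-type formula \eqref{Hol-1}, forces $\int_0^1 \mu_t(x_n)\,dt \to 0$, which is well-known to imply pointwise decay of the singular-value function and hence convergence of $x_n$ to $0$ in measure.

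For the nontrivial direction, my plan is to split each $x_n$ into a piece supported on a fixed $\tau$-finite projection and a uniformly small tail, then finish the compressed piece by dominated convergence in the order continuous space $E$. First, I would use semifiniteness of $\mathcal M$ to choose an increasing sequence of $\tau$-finite projections $p_k \uparrow \mathbf 1$ and set $e_k := \mathbf 1 - p_k$, so $e_k \downarrow 0$. Applying $E$-equi-integrability to this sequence yields $\sup_n \|e_k x_n e_k\|_{E(\mathcal M)} \to 0$ as $k\to\infty$. A separate argument is then needed to upgrade this two-sided sandwich bound to uniform control of the off-diagonal blocks $p_k x_n e_k$ and $e_k x_n p_k$; this can be carried out via the operator inequality $|p_k x_n e_k|^2 \le e_k x_n^* x_n e_k$, the identity $\mu(y) = \mu(y^*)$, and full symmetry of $E(\mathcal M)$ from Remark \ref{rem-fully}. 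The outcome is that, for any $\epsilon>0$, there exists a single $\tau$-finite projection $p$ with
\[
\sup_n \|x_n - p x_n p\|_{E(\mathcal M)} < \epsilon.
\]

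It remains to show $\|p x_n p\|_{E(\mathcal M)} \to 0$. The sequence $y_n := p x_n p$ converges to $0$ in measure, remains bounded in $E$-norm and $E$-equi-integrable, and its singular-value functions are supported in the fixed interval $[0,\tau(p)]$ with $\mu_t(y_n) \to 0$ for every $t>0$. Applying $E$-equi-integrability a second time to the decreasing spectral projections $\chi_{(M,\infty)}(|y_n|)$ (which tend to $0$ uniformly in $n$ as $M \to \infty$, again via convergence in measure) produces a common majorant for $(\mu(y_n))$ in $E$; order continuity of $E$ then delivers $\|y_n\|_{E(\mathcal M)} \to 0$ via dominated convergence in symmetric function spaces. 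Combining with the previous bound gives $\limsup_n \|x_n\|_{E(\mathcal M)} \le \epsilon$, and since $\epsilon>0$ was arbitrary the result follows. The principal obstacle is the transition from the two-sided sandwich form in the definition to uniform bounds on the off-diagonal blocks; once this is in place, the remaining analysis inside the finite-trace corner $p\mathcal M p$ is fairly standard.
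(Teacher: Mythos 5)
This lemma is not proved in the paper at all: it is imported verbatim from \cite[Theorem 3.4]{DoPaS2016}, so your attempt has to be judged on its own. The forward implication is fine. The reverse implication has a sensible architecture (strip off a tail controlled by equi-integrability, then finish inside a $\tau$-finite corner by a domination argument), and you correctly flag the passage from two-sided compressions to off-diagonal blocks as the principal obstacle --- but the justification you offer for it does not work, and the corner step has a second genuine gap. Concretely: Definition \ref{def equi} only controls $e_k x_n e_k$. Your inequality $|p_k x_n e_k|^2 \le e_k x_n^* x_n e_k = e_k |x_n|^2 e_k$ bounds the off-diagonal block by a compression of $|x_n|^2$, not of $x_n$, and there is no general comparison between $\mu(e|x|^2e)^{1/2}$ and $\mu(exe)$ (the operator inequality $(exe)^*(exe)\le e|x|^2e$ goes the wrong way). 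That two-sided control of a bounded set implies one-sided control (equivalently, control of $axb$ uniformly over contractions $a,b$) is true, but it is itself one of the nontrivial equivalences of \cite[Theorem 3.2]{DoPaS2016} and needs an actual argument, not a one-line operator inequality plus $\mu(y)=\mu(y^*)$.

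In the corner, with $y_n=px_np$ and $\tau(p)<\infty$, pointwise decay $\mu_t(y_n)\to0$ plus norm boundedness is not enough to conclude $\|y_n\|_E\to0$ (take $y_n=n\chi_{[0,1/n]}$ in $L_1(0,1)$), so equi-integrability must indeed be used a second time --- but the projections $\chi_{(M,\infty)}(|y_n|)$ you propose to feed into Definition \ref{def equi} depend on $n$ and therefore do not form a single decreasing sequence $e_m\downarrow 0$; the definition simply does not apply to them, and the asserted ``common majorant for $(\mu(y_n))$ in $E$'' does not follow. What is actually needed is the implication ``equi-integrable $\Rightarrow$ for every $\epsilon>0$ there is $\delta>0$ with $\sup_n\|qx_nq\|_{E(\mathcal M)}<\epsilon$ whenever $\tau(q)<\delta$,'' again one of the equivalences in \cite[Theorem 3.2]{DoPaS2016}, usually proved by a contradiction/diagonalization argument. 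A smaller point: extracting an increasing \emph{sequence} of $\tau$-finite projections $p_k\uparrow\mathbf{1}$ requires $\sigma$-finiteness of $\tau$ (or separability of $H$), which the lemma as stated does not assume. In short, the skeleton is right, but the two load-bearing steps are exactly the content of the cited theorem and remain unproved here.
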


\subsection{Hyperfinite von Neumann algebras}\label{sec-22}
A von Neumann algebra is called hyperfinite if coincides with the weak closure of a increasing net of finite dimensional subalgebras (see e.g. \cite{Connes1976} or \cite[Page 49]{SiSm2008}).

Consider $\mathcal{M}$ a hyperfinite and infinite von Neumann algebra acting on a separable Hilbert space $H$. Denote by $\mathcal{R}$ the hyperfinite $\mathrm{II}_1$ factor (see for example \cite{Connes1976}).  Then $\mathcal{M}$ is trace preserving $*$-isomorphic to a  von Neumann subalgebra of $\mathcal{R}\bar{\otimes} B(H)$.
Indeed, we have that (see e.g. \cite[Theorem V.1.19]{Tak1979I})
$$\mathcal{M}=\mathcal{M}_{\mathrm{I}}\oplus \mathcal{M}_{\mathrm{II}_1}\oplus \mathcal{M}_{\mathrm{II}_{\infty}}.$$
By applying \cite[Proposition 6.5]{Connes1976}, we have that  (see also \cite[Page 59]{HRS2003}) every finite hyperfinite von Neumann algebra $\mathcal{N}$ is ($*$-isomorphic to) a countable direct sum of von Neumann algebras of the form $\mathcal{A}\bar{\otimes} \mathcal{B}$, where $\mathcal{A}$ is abelian and $\mathcal{B}$ is either $B(\ell_2^n)$ for some $n<\infty$ or $\mathcal{R}$. Note that $\mathcal{A}$ can be realized as a subalgebra of $L_{\infty}(0,\infty)=L_{\infty}(0,1)\bar{\otimes}\ell_{\infty}.$ Hence, $\mathcal{A}$ can be realised as a subalgebra in $\mathcal{R}\bar{\otimes}B(H).$ Obviously, $\mathcal{B}$ is a subalgebra in $B(H).$ Thus, $\mathcal{A}\bar{\otimes}\mathcal{B}$ can be realised as a subalgebra in $\mathcal{R}\bar{\otimes}B(H).$ Then $\mathcal{N}$ is a subalgebra of $\mathcal{R}\bar{\otimes}B(H)\bar{\otimes}\ell_{\infty},$ which is trace preserving  $*$-isomorphic to a subalgebra of $\mathcal{R}\bar{\otimes} B(H)\bar{\otimes} B(H)=\mathcal{R}\bar{\otimes} B(H)$.

According to \cite{Connes1976} (see also \cite[Page 60]{HRS2003}), $\mathcal{M}_{\mathrm{II}_{\infty}}$ is $\mathcal{N}\bar{\otimes} B(H)$ where $\mathcal{N}$ is a
 finite hyperfinite von Neumann algebra. Hence, $\mathcal{M}_{\mathrm{II}_{\infty}}$  is trace preserving $*$-isomorphic to a von Neumann subalgebra of  $\mathcal{R}\bar{\otimes} B(H)\bar{\otimes} B(H)=\mathcal{R}\bar{\otimes} B(H)$.

If $\mathcal{M}_{\mathrm{I}}$ is infinite, then $\mathcal{M}_{\mathrm{I}}$ is $*$-isomorphic to a countable direct sum of  von Neumann algebras $\mathcal{A}\bar{\otimes} \mathcal{B}$, where $\mathcal{A}$ is abelian and $\mathcal{B}$ is $B(\ell_2^n)$ for some $n<\infty$ or $B(H)$ (\cite[Theorem V.1.27]{Tak1979I}),  and consequently,
 $\mathcal{M}_{\mathrm{I}}$  is trace preserving $*$-isomorphic to a von Neumann subalgebra of $\mathcal{R}\bar{\otimes}\ell_\infty \bar{\otimes}B(H)$, which is a subalgebra of $\mathcal{R}\bar{\otimes} B(H)$.

\subsection{Noncommutative martingale differences}
In this subsection,  we  review the basics of noncommutative martingales.
Let
$(\mathcal{M}_n)_{n\geq1}$ be an increasing sequence of von Neumann
subalgebras of $\mathcal{M}$ such that the union of the $\mathcal{M}_n$'s is
weakly dense in $\mathcal{M}$. Assume that for every $n\geq 1$, there exists a normal $\tau$-invariant   conditional
expectation from  $\mathcal{M}$ onto  $\mathcal{M}_n$. In fact,  for the case where $\mathcal{M}$ is finite then such  conditional expectations always exist (see \cite[Lemma 3.6.2]{SiSm2008} or \cite{Tak1979I}). If the  restriction of $\tau$ on $\mathcal{M}_n$ remains semifinite, then such  conditional expectations  exist (\cite[Page 332]{Tak1979I}).  Since $\mathcal{E}_n$ is $\tau$-invariant, it extends to a contractive projection from $L_p(\mathcal{M},\tau)$ onto $L_p(\mathcal{M}_n,\tau_n)$ for all $1\leq p\leq \infty$, where $\tau_n$ denotes the restriction of $\tau$ on $\mathcal{M}_n$.

\begin{definition}
A sequence $x=(x_k)_{k\geq1} $ in $(L_1+L_{\infty})(\mathcal{M})$ is said to be a sequence of martingale differences if $x_k\in \mathcal{M}_k$ for each $k\geq1$ and $\mathcal{E}_{k-1}(x_k)=0$ for every $k\geq2$.
\end{definition}

The following  important result is a  corollary of \cite[Theorem 3.1]{Rand2002}.
\begin{lemma}\label{CR lemma}
Let $(x_k)_{k\geq1}\subset L_1(\mathcal{M})$ be a sequence of martingale differences. Then
$$\Big\|\sum_{k\geq1}r_k\otimes x_k\Big\|_{L_{1,\infty}(L_\infty(0,1)\bar{\otimes} \mathcal{M})}\leq c_{{\rm abs}}\sup_n\Big\|\sum_{k=1}^n x_k\Big\|_1,$$
where $(r_k)_{k\ge 1}$ is the sequence of Rademacher functions (see, for example, \cite{LiTz1979}) and $L_{\infty}(0,1)\bar{\otimes} \mathcal{M}$ is the tensor product von Neumann algebra (see for example \cite[Page 183]{Tak1979I}).
\end{lemma}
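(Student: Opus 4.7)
The plan is to apply the weak type $(1,1)$ inequality of \cite[Theorem 3.1]{Rand2002} for $\pm 1$ noncommutative martingale transforms at each fixed realisation of the Rademachers, and then integrate the resulting distributional inequality over $(0,1)$ using the Fubini structure of the trace on $L_\infty(0,1)\bar\otimes\mathcal{M}$.

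First, I would observe that for any deterministic sign sequence $\varepsilon=(\varepsilon_k)_{k\ge 1}$ with values in $\{-1,+1\}$, the sequence $(\varepsilon_k x_k)$ is again a martingale difference sequence with respect to $(\mathcal{M}_k)$, since $\varepsilon_k x_k\in\mathcal{M}_k$ and $\mathcal{E}_{k-1}(\varepsilon_k x_k)=\varepsilon_k\mathcal{E}_{k-1}(x_k)=0$. Applying \cite[Theorem 3.1]{Rand2002} to the transformed martingale $\sum_{k=1}^n \varepsilon_k x_k$ then yields, with an absolute constant independent of $\varepsilon$,
$$\Big\|\sum_{k\ge 1}\varepsilon_k x_k\Big\|_{L_{1,\infty}(\mathcal{M})}\le c_{{\rm abs}}\sup_n\Big\|\sum_{k=1}^n x_k\Big\|_1.$$

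Next, I would set $A:=\sum_{k\ge 1} r_k\otimes x_k$ and, for each $t\in(0,1)$, $A(t):=\sum_k r_k(t)x_k\in L_1(\mathcal{M})$. Specialising the pointwise bound above to $\varepsilon_k=r_k(t)$ gives the uniform estimate $\|A(t)\|_{L_{1,\infty}(\mathcal{M})}\le c_{{\rm abs}}\sup_n\|\sum_{k=1}^n x_k\|_1$, valid for every $t\in(0,1)$. Since the trace on $L_\infty(0,1)\bar\otimes\mathcal{M}$ is the Fubini trace $dt\otimes\tau$, the distribution function of $A$ decomposes as
$$d_s(A):=(dt\otimes\tau)\bigl(\chi_{(s,\infty)}(|A|)\bigr)=\int_0^1\tau\bigl(\chi_{(s,\infty)}(|A(t)|)\bigr)\,dt=\int_0^1 d_s(A(t))\,dt,\quad s>0.$$
Multiplying by $s$, integrating the inequality $s\,d_s(A(t))\le\|A(t)\|_{L_{1,\infty}(\mathcal{M})}$ in $t$, and then taking the supremum over $s$, together with the standard identity $\|f\|_{L_{1,\infty}}=\sup_{s>0}s\,d_s(f)=\sup_{t>0}t\mu_t(f)$, delivers the asserted bound (after first running the argument on finite partial sums and passing to the limit by the Fatou-type lower semicontinuity of the $L_{1,\infty}$ quasi-norm).

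The main obstacle is essentially bookkeeping rather than analytic: one has to verify that the constant in \cite[Theorem 3.1]{Rand2002} is genuinely independent of the choice of signs $\varepsilon$ (so that integrating in $t$ is legitimate) and that the tensor-product distribution function admits the Fubini decomposition above. Both points are standard, which is why the statement of the lemma is phrased as a corollary of Theorem 3.1 rather than as a theorem in its own right.
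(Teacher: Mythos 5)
Your argument is correct and rests on the same key ingredient as the paper's proof, namely the weak type $(1,1)$ inequality for noncommutative martingale transforms from \cite[Theorem 3.1]{Rand2002}, but you assemble the estimate differently. The paper ampliates once: it observes that $(\chi_{(0,1)}\otimes x_k)_{k\ge1}$ is a martingale difference sequence in $L_1(L_\infty(0,1)\bar{\otimes}\mathcal{M})$ for the filtration $(L_\infty(0,1)\otimes\mathcal{M}_k)_{k\ge1}$, and applies the transform inequality a single time with the (scalar-valued, hence predictable and contractive) coefficients $r_k\otimes 1$, which produces the bound directly in the tensor-product algebra. You instead apply the inequality fibrewise, once for each deterministic sign pattern $(r_k(t))_k$, and then reassemble the weak-type bound on $L_{1,\infty}(L_\infty(0,1)\bar{\otimes}\mathcal{M})$ by integrating distribution functions in $t$. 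That gluing step is the one place where care is genuinely needed, and you handle it correctly: one cannot in general average weak-$L_1$ estimates, since $L_{1,\infty}$ is not normable, but your argument sidesteps this because for a finite sum $A=\sum_k r_k\otimes x_k$ (a step function of $t$) the functional calculus acts fibrewise, so $d_s(A)=\int_0^1 d_s(A(t))\,dt$, and since $\|\cdot\|_{L_{1,\infty}}=\sup_{s>0}s\,d_s(\cdot)=\sup_{t>0}t\mu_t(\cdot)$, the uniform fibre bound $s\,d_s(A(t))\le C$ integrates to $s\,d_s(A)\le C$ before the supremum over $s$ is taken. Your two bookkeeping points are both fine: the constant in \cite[Theorem 3.1]{Rand2002} is absolute for predictable transforming sequences of norm at most one, which deterministic signs certainly are, and the Fubini disintegration of the trace on $L_\infty(0,1)\bar{\otimes}\mathcal{M}$ is elementary for step functions. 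The paper's route is shorter because it avoids the disintegration entirely; yours makes explicit why Rademacher averaging costs nothing at the weak-$L_1$ level. Both proofs treat the passage from finite partial sums to the infinite series equally lightly; your appeal to lower semicontinuity of $\mu_t$ under convergence in measure is the appropriate way to close that minor gap.
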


\begin{proof}
Since $(x_k)_{k\geq1}$ is a martingale difference sequence in $L_1(\mathcal{M})$, so is $(\chi_{(0,1)}\otimes x_k)_{k\geq 1}$ in $L_1(L_{\infty}(0,1)\otimes \mathcal{M})$ with respect to the filtration $(L_{\infty}(0,1)\otimes \mathcal{M}_k)_{k\geq1}$. Then,
it follows from \cite[Theorem 3.1]{Rand2002} that
\begin{align*}
\Big\|\sum_{k\geq1}r_k\otimes x_k\Big\|_{L_{1,\infty}(L_\infty(0,1)\bar{\otimes} \mathcal{M})}&=\Big\|\sum_{k\geq1}(r_k\otimes 1)(\chi_{(0,1)}\otimes x_k)\Big\|_{L_{1,\infty}(L_\infty(0,1)\bar{\otimes} \mathcal{M})}\\
&\leq c_{{\rm abs}} \sup_n \Big\| \sum_{k=1}^n \chi_{(0,1)}\otimes x_k \Big\|_{L_1(L_\infty(0,1)\bar{\otimes} \mathcal{M})}\\
&=c_{{\rm abs}} \sup_n \Big\| \sum_{k=1}^n  x_k \Big\|_{1}.
\end{align*}
\end{proof}

\section{Proof of Theorem \ref{2-main-result}}\label{sec-32}

In this section, we prove  Theorem \ref{2-main-result}.
We will need the following perturbation lemma from \cite[Lemma 2]{AlFr1982}.

\begin{lemma}\label{perturbation}
Let $X$ be a normed space,  and let $(x_n)_{n\geq1}$ be a bounded sequence in $X$. Then
\begin{enumerate}[{\rm (i)}]
\item \label{p-lem-1} if $(x_n)_{n\geq1}$ is $2$-co-lacunary and $x\in X$, then there exists $m\in \mathbb{N}$ such that $(x_n-x)_{n\geq m}$ is $2$-co-lacunary;
\item \label{p-lem-2} if $(x_n)_{n\geq1}$ is $2$-co-lacunary and $(y_n)_{n\geq1}\subset X$ with $\sum_{n\geq1}\|x_n-y_n\|_X$ being convergent, then there exists $m\in \mathbb{N}$ such that $(y_n)_{n\geq m}$ is $2$-co-lacunary.
\end{enumerate}
\end{lemma}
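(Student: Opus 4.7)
The plan is to handle (i) and (ii) separately; (ii) follows from a direct triangle and Cauchy-Schwarz argument, while (i) is the main work and splits into two cases according to whether $x$ lies in $Y := \overline{\mathrm{span}\{x_n : n \geq 1\}}$. If $x \notin Y$, Hahn-Banach supplies $\phi \in X^*$ with $\phi(x)=1$ and $\phi|_Y=0$, whence $|\sum \lambda_n| = |\phi(\sum \lambda_n(x_n-x))| \leq \|\phi\|\cdot\|\sum \lambda_n(x_n-x)\|$. Combined with $\|\sum \lambda_n(x_n-x)\|\geq \|\sum\lambda_n x_n\|-|\sum\lambda_n|\cdot\|x\|\geq C^{-1}(\sum|\lambda_n|^2)^{1/2}-\|x\|\|\phi\|\cdot\|\sum\lambda_n(x_n-x)\|$, this yields $2$-co-lacunarity of $(x_n-x)_{n\geq 1}$ with constant $C(1+\|x\|\|\phi\|)$; so $m=1$ suffices.

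The harder case is $x \in Y$. A preliminary observation is that $2$-co-lacunarity forces $(x_n)$ to be linearly independent, and that the unique expansion $x=\sum_{k\geq 1}a_k x_k$ (convergent in $X$) has coefficient sequence $(a_k)\in\ell_2$; this follows by applying the defining inequality to Cauchy tails of the partial sums, and is the one nontrivial use of the $2$-co-lacunary hypothesis. Assume $x\neq 0$ (otherwise the statement is trivial), fix $\delta>0$ with $\delta<\sum_{k\geq 1}|a_k|^2$, and then choose $N$ with $\sum_{k=1}^N|a_k|^2\geq \delta$ and $\|x-z\|<\epsilon$, where $z:=\sum_{k=1}^N a_k x_k$ and $\epsilon$ will be chosen momentarily. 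For $m>N$ and a finite sequence $(\lambda_n)_{n\geq m}$, set $\Lambda := \sum_{n\geq m}\lambda_n$; then $\sum_{n\geq m}\lambda_n(x_n-z) = \sum_{n\geq m}\lambda_n x_n - \Lambda\sum_{k=1}^N a_k x_k$ is a combination of $(x_k)$ supported on the disjoint blocks $\{1,\ldots,N\}$ and $\{m,m+1,\ldots\}$, so the $2$-co-lacunarity of $(x_k)$ yields
\[
\Bigl\|\sum_{n\geq m}\lambda_n(x_n-z)\Bigr\|\geq \frac{1}{C}\Bigl(\sum_{n\geq m}|\lambda_n|^2+\delta|\Lambda|^2\Bigr)^{1/2}.
\]
This simultaneously delivers $\|\sum\lambda_n(x_n-z)\|\geq C^{-1}(\sum|\lambda_n|^2)^{1/2}$ and $|\Lambda|\leq C\delta^{-1/2}\|\sum\lambda_n(x_n-z)\|$. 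Perturbing $z$ back to $x$ gives $\|\sum\lambda_n(x_n-x)\|\geq \|\sum\lambda_n(x_n-z)\|-\epsilon|\Lambda|$, and taking $\epsilon\leq \sqrt{\delta}/(2C)$ absorbs the correction and produces $2$-co-lacunarity of $(x_n-x)_{n\geq m}$ with constant $2C$.

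For (ii), write $\sum\lambda_n y_n = \sum\lambda_n x_n + \sum\lambda_n(y_n-x_n)$ and estimate the second sum via Cauchy-Schwarz: $\sum_{n\geq m}|\lambda_n|\|x_n-y_n\|\leq(\sum|\lambda_n|^2)^{1/2}(\sum_{n\geq m}\|x_n-y_n\|^2)^{1/2}$. Since $\sum_n\|x_n-y_n\|<\infty$ forces $\|x_n-y_n\|\to 0$ and hence $\sum_n\|x_n-y_n\|^2<\infty$, one can choose $m$ so large that $(\sum_{n\geq m}\|x_n-y_n\|^2)^{1/2}<1/(2C)$; combined with the $2$-co-lacunarity of $(x_n)$ this gives $2$-co-lacunarity of $(y_n)_{n\geq m}$ with constant $2C$. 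The main obstacle is the bookkeeping in the case $x\in Y$ of (i), specifically establishing the $\ell_2$-membership of $(a_k)$ and balancing $\delta,\epsilon,m$ so that the perturbation term is absorbed by the main inequality.
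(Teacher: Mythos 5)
The paper does not prove this lemma; it is quoted from Aldous--Fremlin \cite[Lemma 2]{AlFr1982}, so your argument has to stand on its own. Your proof of (ii) is correct, and so is the subcase of (i) where $x$ lies outside the closed linear span $Y$ of $(x_n)_{n\ge1}$. The subcase $x\in Y$, however, contains a genuine gap: a $2$-co-lacunary sequence need not be a basic sequence, and an element of $Y$ need not admit a norm-convergent expansion $x=\sum_k a_kx_k$. What is true is that each coefficient functional extends continuously to $Y$ (since $|\lambda_k|\le C\|\sum_n\lambda_nx_n\|_X$), producing a sequence $(a_k)\in\ell_2$; but the series $\sum_k a_kx_k$ need not converge to $x$, and one can have $x\ne0$ with all $a_k=0$. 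Concretely, in $X=\ell_2\oplus\mathbb{C}$ put $x_n=(e_n,1)$; then $\|\sum_n\lambda_nx_n\|\ge(\sum_n|\lambda_n|^2)^{1/2}$, so the sequence is $2$-co-lacunary with $C=1$ and bounded, and $x=(0,1)=\lim_N\frac1N\sum_{n=1}^Nx_n$ lies in $Y$ while every $a_k=0$. Here no $\delta>0$ with $\delta<\sum_k|a_k|^2$ exists; more generally the two requirements you impose on $z=\sum_{k\le N}c_kx_k$, namely $\|x-z\|<\epsilon\le\sqrt{\delta}/(2C)$ and $\sum_{k\le N}|c_k|^2\ge\delta$, are incompatible in this example because $\|x-z\|\ge\|(c_k)\|_{\ell_2}$. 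So your scheme cannot be completed, even though the conclusion of the lemma is plainly true there ($x_n-x=(e_n,0)$).

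The case split is in fact unnecessary, and (i) can be repaired as follows. If no tail $(x_n-x)_{n\ge m}$ were $2$-co-lacunary (and $x\ne 0$, the case $x=0$ being trivial), one could pick two disjointly supported finite coefficient vectors $\lambda^{(1)},\lambda^{(2)}$ of $\ell_2$-norm one with $\|\sum_n\lambda^{(i)}_n(x_n-x)\|<\epsilon_i$. Since $\|\sum_n\lambda^{(i)}_nx_n\|\ge C^{-1}$, the sums $\Lambda_i=\sum_n\lambda^{(i)}_n$ satisfy $|\Lambda_i|\,\|x\|\ge C^{-1}-\epsilon_i>0$. Then $\Lambda_1^{-1}\sum_n\lambda^{(1)}_n(x_n-x)-\Lambda_2^{-1}\sum_n\lambda^{(2)}_n(x_n-x)=\Lambda_1^{-1}\sum_n\lambda^{(1)}_nx_n-\Lambda_2^{-1}\sum_n\lambda^{(2)}_nx_n$ is a disjointly supported combination of the $x_n$, so $2$-co-lacunarity bounds its norm below by $C^{-1}(|\Lambda_1|^{-2}+|\Lambda_2|^{-2})^{1/2}$, while the construction bounds it above by $\epsilon_1|\Lambda_1|^{-1}+\epsilon_2|\Lambda_2|^{-1}\le(\epsilon_1^2+\epsilon_2^2)^{1/2}(|\Lambda_1|^{-2}+|\Lambda_2|^{-2})^{1/2}$; taking $\epsilon_1^2+\epsilon_2^2<C^{-2}$ gives a contradiction. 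This yields (i) without any discussion of expansions, and your argument for (ii) can be kept as is.
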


The following assertion can be found in \cite{Rand2006}.

\begin{theorem}[{\cite[Corollary 3.7]{Rand2006}}]\label{L1 theorem} Let $\mathcal{M}$ be hyperfinite and semifinite. Every bounded sequence $(x_n)_{n\geq1}$ in $L_1(\mathcal{M})$ admits either a convergent subsequence or a $2$-co-lacunary subsequence.
\end{theorem}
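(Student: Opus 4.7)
The plan is to deduce Theorem~\ref{L1 theorem} as a special case of Theorem~\ref{2-main-result} by setting $E=L_1(0,\infty)$.

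I would first observe that $L_1(0,\infty)$ is an order continuous symmetric Banach function space satisfying $L_1\subset L_1+L_2$, so Theorem~\ref{2-main-result} applies and yields the equivalence of the desired dichotomy~(i) with the pairwise-orthogonal condition~(ii). Hence it suffices to verify~(ii) for $L_1(\mathcal{M})$: every pairwise-orthogonal sequence in $L_1(\mathcal{M})$ that does not norm-converge to zero admits a $2$-co-lacunary subsequence.

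To verify~(ii), I would let $(x_n)\subset L_1(\mathcal{M})$ be pairwise orthogonal with $\|x_n\|_{L_1}\not\to 0$; after passing to a subsequence, I may assume $\|x_n\|_{L_1}\ge A>0$. Direct expansion of the square using $x_n^*x_m=0$ for $n\neq m$ gives the algebraic identity
\[
\Big|\sum_n \lambda_n x_n\Big|^2=\sum_n|\lambda_n|^2|x_n|^2,
\]
and hence
\[
\Big\|\sum_n\lambda_n x_n\Big\|_{L_1(\mathcal{M})}=\tau\Bigl(\bigl(\sum_n|\lambda_n|^2|x_n|^2\bigr)^{1/2}\Bigr).
\]
The second relation $x_nx_m=0$ further forces the left supports $p_n$ of the $x_n$ to be pairwise orthogonal projections, so that the assignment $y=\sum_m\lambda_m x_m\mapsto (p_ny)_n$ picks out the individual summand $\lambda_n x_n$ and splits the sum as a ``column vector'' with mutually orthogonal left supports.

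The core of the argument is to extract the $\ell_2$-type lower bound $\|\sum_n\lambda_n x_n\|_{L_1}\gtrsim A(\sum_n|\lambda_n|^2)^{1/2}$. I would apply the trace Cauchy--Schwarz inequality $\tau(T)\le \|T\|_\infty^{1/2}\tau(T^{1/2})$ to the positive operator $T=\sum_n|\lambda_n|^2|x_n|^2$, yielding $\tau(T^{1/2})\ge \tau(T)/\|T\|_\infty^{1/2}$, and then use the orthogonal block structure from the $p_n$'s to compare both $\tau(T)$ and $\|T\|_\infty$ with $A$ and $\sum_n|\lambda_n|^2$. The hard part will be that the $L_1$-normalization alone does not directly control $\|x_n\|_\infty$ or $\|x_n\|_2$, so bounding $\|T\|_\infty$ requires additional thinning. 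I would address this either by a Ramsey-style extraction that makes the singular value functions $\mu(x_n)$ mutually comparable, or, alternatively, by a gliding-hump construction in a hyperfinite filtration of $\mathcal{M}$ that transforms $(x_n)$ into a small perturbation of a martingale difference sequence, at which point Theorem~\ref{main theorem 1} applied with $E=L_1$ supplies the $2$-co-lacunarity (which is carried back to the original $(x_n)$ via Lemma~\ref{perturbation}).
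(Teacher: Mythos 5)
There is a genuine and fatal flaw: your derivation is circular within the logic of this paper. Theorem~\ref{L1 theorem} is not proved here at all --- it is quoted verbatim from Parcet--Randrianantoanina (\cite[Corollary 3.7]{Rand2006}) and then \emph{used as an input} to establish Theorem~\ref{2-main-result}: Corollary~\ref{l1+l2 thm} is deduced directly from Theorem~\ref{L1 theorem} via the embedding of \cite[Lemma 38]{JSZ}, Corollary~\ref{l1+l2 cor} follows from that, and Case~B of the proof of Theorem~\ref{2-main-result} invokes Corollary~\ref{l1+l2 cor}. So you cannot specialize Theorem~\ref{2-main-result} to $E=L_1$ in order to prove Theorem~\ref{L1 theorem}; that inverts the paper's chain of dependencies. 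A self-contained proof would have to reproduce the actual content of \cite[Corollary 3.7]{Rand2006}, whose hard step is the reduction of an arbitrary bounded non-relatively-compact sequence in $L_1(\mathcal M)$ to the orthogonal/equi-integrable situation (via the subsequence splitting principle and Gundy's decomposition, i.e.\ the analogue of Theorem~\ref{main theorem 1} for $E=L_1$); your sketch does not supply this, and your closing appeal to Theorem~\ref{main theorem 1} with $E=L_1$ is only gestured at (``a gliding-hump construction \dots transforms $(x_n)$ into a small perturbation of a martingale difference sequence'') without any argument that such a filtration and perturbation exist.

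A secondary point: the technical step you flag as ``the hard part'' --- obtaining $\|\sum_n\lambda_n x_n\|_1\gtrsim A(\sum_n|\lambda_n|^2)^{1/2}$ for a pairwise orthogonal semi-normalized sequence --- is in fact immediate and needs no Cauchy--Schwarz, no control of $\|x_n\|_\infty$, and no thinning. Since the operators $|\lambda_n|^2|x_n|^2$ are positive with mutually orthogonal supports, one has $\bigl(\sum_n|\lambda_n|^2|x_n|^2\bigr)^{1/2}=\sum_n|\lambda_n|\,|x_n|$, whence
\[
\Bigl\|\sum_n\lambda_n x_n\Bigr\|_1=\sum_n|\lambda_n|\,\|x_n\|_1\;\ge\;A\sum_n|\lambda_n|\;\ge\;A\Bigl(\sum_n|\lambda_n|^2\Bigr)^{1/2}.
\]
Your proposed route via $\tau(T)\le\|T\|_\infty^{1/2}\tau(T^{1/2})$ would indeed get stuck exactly where you say it would, but the obstruction is an artifact of the method, not of the problem. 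In any case, fixing this step does not rescue the proposal, because verifying condition (ii) of Theorem~\ref{2-main-result} is only useful if Theorem~\ref{2-main-result} is available independently of Theorem~\ref{L1 theorem}, which in this paper it is not.
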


\begin{corollary}\label{l1+l2 thm} Let $H$ be a separable Hilbert space and $\mathcal{R}$ be the hyperfinite $\mathrm{II}_1$ factor.  Every bounded sequence $(x_n)_{n\geq1}$ in $(L_1+L_2)(\mathcal{R}\bar{\otimes}B(H))$ admits either a convergent subsequence or a $2$-co-lacunary subsequence.
\end{corollary}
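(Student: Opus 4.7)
Writing $\mathcal{N}:=\mathcal{R}\bar{\otimes}B(H)$ (which is hyperfinite and semifinite), my plan is to reduce the statement to Theorem~\ref{L1 theorem} through a subsequence-splitting argument tailored to the space $(L_1+L_2)(\mathcal{N})$. The idea is to separate out a pairwise orthogonal tail, handled directly via Holmstedt's formula~\eqref{Hol-1}, from an equi-integrable remainder, handled via Lemma~\ref{e imply com}.

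Concretely, I would first invoke a noncommutative subsequence-splitting lemma (in the spirit of Randrianantoanina and Sukochev--Xu, available since $\mathcal{N}$ is hyperfinite semifinite and $L_1+L_2$ is an order continuous fully symmetric Banach function space with the Fatou property) to pass to a subsequence and write $x_n=u_n+v_n$, where $(u_n)$ is $(L_1+L_2)$-equi-integrable and $(v_n)$ is pairwise orthogonal, i.e.\ $v_n^\ast v_m = v_n v_m^\ast = 0$ for $n\neq m$. By a further diagonal extraction, $(u_n)$ would converge in measure to some $u\in (L_1+L_2)(\mathcal{N})$; Lemma~\ref{e imply com} then upgrades this to $u_n\to u$ in $(L_1+L_2)(\mathcal{N})$ because $(u_n)$ is bounded and equi-integrable.

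If in addition $\|v_n\|_{L_1+L_2}\to 0$, then $x_n\to u$ in $L_1+L_2$ and we have extracted a convergent subsequence. Otherwise, after passing to a further subsequence, $(v_n)$ is semi-normalized in $(L_1+L_2)(\mathcal{N})$, and the pairwise orthogonality forces $\mu(\sum_n \lambda_n v_n)$ to coincide with the decreasing rearrangement of the disjoint union $\bigsqcup_n |\lambda_n|\mu(v_n)$ on $(0,\infty)$.

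The main work is then to show that such a semi-normalized pairwise orthogonal sequence is automatically $2$-co-lacunary in $(L_1+L_2)(\mathcal{N})$; I would prove this by a direct Holmstedt calculation yielding $\|\sum_n \lambda_n v_n\|_{L_1+L_2}\gtrsim (\sum_n |\lambda_n|^2)^{1/2}$, which plays the role of a cotype-$2$ estimate for disjointly supported sequences. Combining this with $u_n\to u$ via Lemma~\ref{perturbation} (after thinning so that $\sum_n\|u_n-u\|_{L_1+L_2}<\infty$) would give that $(x_n)=(u_n+v_n)$ is itself $2$-co-lacunary in $L_1+L_2$. The hardest part I anticipate is exactly this disjoint-sequence estimate: since $\|\cdot\|_{L_1+L_2}$ is strictly weaker than both $\|\cdot\|_1$ and $\|\cdot\|_2$, one must split the rearranged $\bigsqcup_n|\lambda_n|\mu(v_n)$ according to whether the support of each $|\lambda_n|\mu(v_n)$ is absorbed into the $L_1$-region $(0,1)$ or the $L_2$-region $(1,\infty)$ in Holmstedt's formula, and show that either regime already yields the full $\ell_2$-lower bound on $\lambda$.
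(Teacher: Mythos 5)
There is a genuine gap, and it sits at the heart of your argument: the treatment of the equi-integrable part $(u_n)$. You assert that ``by a further diagonal extraction, $(u_n)$ would converge in measure to some $u$,'' and then upgrade this to norm convergence via Lemma~\ref{e imply com}. But a bounded, $E$-equi-integrable sequence need not have any subsequence that converges in measure. The Rademacher sequence $(r_n)$ in $L_1(0,1)\subset (L_1+L_2)(0,\infty)$ is the standard counterexample: it is normalized and $L_1$-equi-integrable (so in your splitting one would have $u_n=r_n$, $v_n=0$), yet $|r_n-r_m|=2$ on a set of measure $1/2$ for all $n\ne m$, so no subsequence is Cauchy in measure, and Lemma~\ref{e imply com} never becomes applicable. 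This is not a repairable technicality: the equi-integrable, non-convergent case is precisely the hard case of the Aldous--Fremlin-type dichotomy (for such sequences one must \emph{produce} a $2$-co-lacunary subsequence by martingale/Khintchine-type arguments, as in Theorem~\ref{main theorem 1} or \cite[Theorem 3.6]{Rand2006}), whereas the disjointly supported case you analyze via Holmstedt is the easy half. Although you announce a reduction to Theorem~\ref{L1 theorem}, that theorem is never actually invoked in your argument, so the hard case is left uncovered. A secondary issue is that the subsequence-splitting lemma you invoke for $(L_1+L_2)(\mathcal{R}\bar{\otimes}B(H))$ is itself a nontrivial result not available in the paper and would need a precise citation and verification of hypotheses.

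For comparison, the paper's proof avoids all of this: by \cite[Lemma 38]{JSZ} there is an isomorphic embedding $T\colon (L_1+L_2)(\mathcal{R}\bar{\otimes}B(H))\to L_1(\mathcal{R})$, and since isomorphic embeddings preserve both convergence and the $2$-co-lacunary property of sequences, the dichotomy is transported directly from Theorem~\ref{L1 theorem} applied in $L_1(\mathcal{R})$. Your Holmstedt-type lower $2$-estimate for pairwise orthogonal semi-normalized sequences in $L_1+L_2$ is correct and could be streamlined (decompose $\sum_n\lambda_n v_n=g+h$ with $g\in L_1$, $h\in L_2$, restrict to the supports of the $v_n$, and use the lower $2$-estimates of $L_1$ and $L_2$ separately), but on its own it only resolves the disjoint case, not the corollary.
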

\begin{proof}

 It was proved in \cite[Lemma 38]{JSZ} that there exists an isomorphic embedding $T:(L_1+L_2)(\mathcal{R}\bar{\otimes}B(H))\to L_1(\mathcal{R}).$
Obviously, the sequence $(x_n)_{n\geq1}$ (in $(L_1+L_2)(\mathcal{M})$) is  2-co-lacunary (respectively, convergent) if and only if the sequence $(Tx_n)_{n\geq1}$ (in $L_1(\mathcal{R})$) is 2-co-lacunary (respectively, convergent). Now the assertion follows from Theorem \ref{L1 theorem}.
\end{proof}

\begin{corollary}\label{l1+l2 cor} Let $\mathcal{M}$ be a hyperfinite and semifinite von Neumann algebra acting on a separable Hilbert space $H$. Every bounded sequence $(x_n)_{n\geq1}$ in $(L_1+L_2)(\mathcal{M})$ admits either a convergent subsequence or a $2$-co-lacunary subsequence.
\end{corollary}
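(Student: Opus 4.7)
The plan is to reduce this statement directly to Corollary~\ref{l1+l2 thm}, using the structural embedding of hyperfinite semifinite algebras into $\mathcal{R}\bar{\otimes} B(H)$ already laid out in Section~\ref{sec-22}.

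First I would invoke the discussion in Section~\ref{sec-22}: writing $\mathcal{M}=\mathcal{M}_{\mathrm{I}}\oplus \mathcal{M}_{\mathrm{II}_1}\oplus \mathcal{M}_{\mathrm{II}_\infty}$, each summand is shown there to be trace preserving $*$-isomorphic to a von Neumann subalgebra of $\mathcal{R}\bar{\otimes} B(H)$. Assembling these pieces yields a single trace preserving normal $*$-isomorphism $\pi$ of $\mathcal{M}$ onto a von Neumann subalgebra of $\mathcal{R}\bar{\otimes} B(H)$.

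Next I would lift $\pi$ to the level of symmetric operator spaces. A trace preserving normal $*$-isomorphism extends uniquely to a $*$-isomorphism between the corresponding algebras of $\tau$-measurable operators, and preserves generalized singular value functions, i.e.\ $\mu_t(\pi(x))=\mu_t(x)$ for all $x\in \mathcal{S}(\mathcal{M})$ and $t>0$. Combining this with \eqref{def-sp} and \eqref{Hol-1}, $\pi$ induces an isometric embedding (still denoted $\pi$)
\[
\pi:(L_1+L_2)(\mathcal{M})\longrightarrow (L_1+L_2)(\mathcal{R}\bar{\otimes} B(H)),
\]
and its image is closed because the domain is complete.

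Finally, given a bounded sequence $(x_n)_{n\ge 1}\subset (L_1+L_2)(\mathcal{M})$, I would apply Corollary~\ref{l1+l2 thm} to the bounded sequence $(\pi(x_n))_{n\ge 1}$ to extract a subsequence $(\pi(x_{n_k}))_{k\ge 1}$ which is either convergent in norm or is $2$-co-lacunary. In the first case $(\pi(x_{n_k}))$ is Cauchy; by isometry and closedness of the range, $(x_{n_k})$ is also Cauchy in $(L_1+L_2)(\mathcal{M})$ and hence norm convergent. In the second case, the defining inequality \eqref{2 co-lacu} transfers verbatim from $(\pi(x_{n_k}))$ to $(x_{n_k})$ with the same constant, since $\pi$ is linear and norm preserving. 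The only genuine technical work is the bookkeeping of the embedding $\mathcal{M}\hookrightarrow \mathcal{R}\bar{\otimes} B(H)$, which has already been carried out in Section~\ref{sec-22}; once that embedding is in hand, the deduction from Corollary~\ref{l1+l2 thm} is entirely formal, with no further obstacle.
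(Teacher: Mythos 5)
Your proposal is correct and follows essentially the same route as the paper: embed $\mathcal{M}$ trace-preservingly into $\mathcal{R}\bar{\otimes}B(H)$ using the structure theory of Subsection~\ref{sec-22}, observe that this induces an (isometric) embedding of $(L_1+L_2)(\mathcal{M})$ into $(L_1+L_2)(\mathcal{R}\bar{\otimes}B(H))$, and transfer the dichotomy of Corollary~\ref{l1+l2 thm} back along this embedding. The only cosmetic difference is that the paper dispatches the finite case separately via Theorem~\ref{L1 theorem}, whereas you treat all cases uniformly through the embedding; both are fine.
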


\begin{proof}
If $\mathcal{M}$ is finite, then the corollary is just Theorem \ref{L1 theorem}. We now consider the case when $\mathcal{M}$ is infinite.  Note that $\mathcal{M}$ is trace preserving $*$-isomorphic to a subalgebra of $\mathcal{R}\bar{\otimes}B(H)$ (see Subsection \ref{sec-22}).
Then $(L_1+L_2)(\mathcal{M})$ is isomorphic to a subspace of $(L_1+L_2)(\mathcal{R}\bar{\otimes}B(H))$, and the assertion of the corollary follows from Corollary \ref{l1+l2 thm}.
\end{proof}

We need the following result from \cite{CDS1997}.

\begin{lemma}[{\cite[Theorem 2.5]{CDS1997}}]\label{measure lem}
Let  $(\mathcal{M},\tau)$ be semifinite and $E(0,\infty)$ be an order continuous symmetric Banach function space. If $(x_n)_{n\geq1}\subset E(\mathcal{M})$ is a sequence of elements which converges to zero in the measure topology, then there exists
a subsequence $(x_{n_k})_{k\geq1}$ and two sequences of mutually orthogonal projections $(p_k)_{k\geq1}$ and $(q_k)_{k\geq1}$ in $\mathcal{M}$ such that
$$\|x_{n_k}-p_kx_{n_k}q_k\|_{E(\mathcal{M})}\to 0.$$
\end{lemma}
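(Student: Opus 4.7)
The plan is to combine a spectral truncation (to reduce to finite-trace spectral supports) with a noncommutative version of the Kadec--Pe\l{}czy\'nski disjointification, producing the required orthogonal projections as lattice differences of spectral supports of a suitably rapid subsequence.

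First, I would use order continuity of $E$ to pre-process the sequence. Write the polar decomposition $x_n = u_n |x_n|$ and set $g_n := \chi_{[\alpha_n, \beta_n]}(|x_n|)$ and $h_n := u_n g_n u_n^*$; both are of finite trace, because $\mu_\infty(x_n) = 0$ (a consequence of the order continuity of $E$). By standard tail estimates, as $\alpha_n \to 0^+$ and $\beta_n \to \infty$ the $E$-norms $\|x_n \chi_{[0,\alpha_n)}(|x_n|)\|_{E(\mathcal{M})}$ and $\|x_n\chi_{(\beta_n,\infty)}(|x_n|)\|_{E(\mathcal{M})}$ vanish, so a choice is possible with $\tilde{x}_n := x_n g_n = h_n x_n g_n$ satisfying $\|x_n - \tilde{x}_n\|_{E(\mathcal{M})} < 2^{-n}$. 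The errors being summable, it suffices to produce the desired projections for $(\tilde x_n)$.

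Next, I would extract a rapidly-thinning subsequence $(n_k)$ and define the orthogonal projections by
\[
q_k := g_{n_k} \wedge \Bigl(1 - \bigvee_{j > k} g_{n_j}\Bigr), \quad p_k := h_{n_k} \wedge \Bigl(1 - \bigvee_{j > k} h_{n_j}\Bigr),
\]
which are manifestly mutually orthogonal: $q_k$ lies below the orthogonal complement of every later $g_{n_j}$, hence is orthogonal to each $q_j$ with $j > k$ (and symmetrically for $p_k$). To estimate the approximation error, decompose
\[
\tilde x_{n_k} - p_k \tilde x_{n_k} q_k = (h_{n_k} - p_k)\tilde x_{n_k} g_{n_k} + p_k \tilde x_{n_k}(g_{n_k} - q_k);
\]
by two applications of the operator H\"older inequality ($\|yz\|_{E(\mathcal{M})} \leq \|y\|_\infty\|z\|_{E(\mathcal{M})}$ and the reverse), both summands are dominated by $\beta_{n_k}\|\chi_{[0,\varepsilon_k]}\|_E$, where $\varepsilon_k := \max\bigl(\sum_{j > k}\tau(g_{n_j}),\,\sum_{j > k}\tau(h_{n_j})\bigr)$ bounds $\tau(g_{n_k} - q_k)$ and $\tau(h_{n_k} - p_k)$ via the subadditivity of the trace on joins of projections.

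The main obstacle is the nested diagonal selection of the parameters $\alpha_n, \beta_n, n_k$ ensuring $\beta_{n_k}\|\chi_{[0,\varepsilon_k]}\|_E \to 0$, since $\beta_{n_k}$ may well be unbounded. At step $k$, once $\beta_{n_k}$ is fixed, order continuity of $E$ provides a sufficiently small $\varepsilon_k$ with $\beta_{n_k}\|\chi_{[0,\varepsilon_k]}\|_E < 2^{-k}$; measure convergence of $(x_n)$, i.e., $\tau(\chi_{[\alpha,\infty)}(|x_n|)) \to 0$ for each fixed $\alpha > 0$, then permits selecting subsequent indices $n_{k+1}, n_{k+2}, \ldots$ making both $\sum_{j > k} \tau(g_{n_j})$ and $\sum_{j > k} \tau(h_{n_j})$ less than $\varepsilon_k$. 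Interlocking these two choices across all $k$ requires care, but ultimately delivers, together with the summable truncation errors, the conclusion $\|x_{n_k} - p_k x_{n_k} q_k\|_{E(\mathcal{M})} \to 0$.
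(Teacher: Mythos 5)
The paper does not prove this lemma at all: it is imported verbatim as \cite[Theorem 2.5]{CDS1997}, so there is no internal proof to compare against. Judged on its own terms, your construction has the right overall shape (two-sided spectral truncation, lattice differences of supports, $\mu(e y)\leq \|y\|_\infty\chi_{[0,\tau(e))}$ to estimate the loss), and the mutual orthogonality of your $p_k,q_k$ and the algebraic decomposition of the error are both correct. But the step you yourself flag as ``the main obstacle'' is a genuine gap, not a technicality. You need $\sum_{j>k}\tau(g_{n_j})<\varepsilon_k$, and you justify this by measure convergence, i.e.\ $\tau(\chi_{[\alpha,\infty)}(|x_n|))\to 0$ for \emph{fixed} $\alpha$. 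However, $g_{n_j}=\chi_{[\alpha_{n_j},\beta_{n_j}]}(|x_{n_j}|)$ with $\alpha_{n_j}$ forced to be small \emph{depending on} $x_{n_j}$, in order to make the lower truncation error $\|x_{n_j}\chi_{[0,\alpha_{n_j})}(|x_{n_j}|)\|_{E(\mathcal{M})}$ summable. These two requirements are irreconcilable. Concretely, take $E=L_1(0,\infty)$, $\mathcal{M}=L_\infty(0,\infty)$ and $x_n=\tfrac1n\chi_{(0,n)}$: this converges to $0$ in measure and has $\|x_n\|_1=1$, but any spectral truncation with error $<1$ must keep the level $\tfrac1n$ in the window, forcing $g_n=\chi_{(0,n)}$ and $\tau(g_n)=n$. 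Then $\sum_{j>k}\tau(g_{n_j})=\infty$ for every subsequence, your $q_k=g_{n_k}\wedge(1-\bigvee_{j>k}g_{n_j})=0$, and $p_kx_{n_k}q_k=0$ while $\|x_{n_k}\|_1=1$. The lemma is nonetheless true for this sequence (take $A_k=(n_{k-1},n_k)$ with $n_k/n_{k-1}\to\infty$), which shows the failure lies in your choice of which supports to subtract, not in the statement.

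The repair requires treating the ``peak'' and ``flat'' parts of each $x_{n_k}$ asymmetrically. Only the peak $\chi_{(\gamma_k,\infty)}(|x_{n_k}|)$ (level $\gamma_k$ fixed \emph{before} $n_k$ is chosen, so that measure convergence makes its trace as small as desired) can be protected from \emph{later} elements by subtracting $\bigvee_{j>k}$ of later peaks, exactly as in your scheme. The flat part $\chi_{[\alpha_k,\gamma_k]}(|x_{n_k}|)$, whose trace is large but whose operator norm is $\leq\gamma_k$, must instead be protected from \emph{earlier} elements: one subtracts $\bigvee_{j<k}$ of the previously constructed (finite-trace) supports, and the loss is controlled by $\gamma_k\,\|\chi_{(0,S_{k-1})}\|_E$ with $S_{k-1}$ the total trace of those earlier supports --- which is why $\gamma_k$ must be chosen \emph{after} $S_{k-1}$ is known. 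Interlacing these two opposite subtractions (and running the whole argument separately on the left supports $h_{n_k}$) is the actual content of the Chilin--Dodds--Sukochev proof; a one-directional subtraction of the full supports, as in your proposal, cannot work.
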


\begin{lemma}\label{to0 in measure} Suppose that $E$ satisfies the Assumption (ii) in Theorem \ref{2-main-result}. Every bounded sequence $(x_n)_{n\geq1}\subset E(\mathcal{M})$ which converges to zero in the measure topology admits either a convergent subsequence or a $2$-co-lacunary subsequence.
\end{lemma}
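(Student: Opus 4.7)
The plan is to reduce the problem to Assumption (ii) of Theorem \ref{2-main-result} by approximating $(x_n)$ in norm by a pairwise orthogonal sequence. The hypothesis that the sequence converges to zero in measure is exactly what is needed to trigger the disjointification result Lemma \ref{measure lem}.

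First, I would apply Lemma \ref{measure lem} to $(x_n)$, extracting a subsequence $(x_{n_k})_{k\ge 1}$ together with mutually orthogonal projection sequences $(p_k)_{k\ge 1}$, $(q_k)_{k\ge 1}$ in $\mathcal{M}$ with
\[
\|x_{n_k}-y_k\|_{E(\mathcal{M})}\longrightarrow 0,\qquad y_k:=p_k x_{n_k} q_k.
\]
After passing to a further subsequence and relabelling, I may assume $\sum_k\|x_{n_k}-y_k\|_{E(\mathcal{M})}<\infty$. For $k\ne m$, a direct computation using $p_kp_m=0$ and $q_kq_m=0$ gives
\[
y_k^{\ast}y_m=q_k x_{n_k}^{\ast}(p_kp_m)x_{n_m}q_m=0,\qquad y_ky_m^{\ast}=p_k x_{n_k}(q_kq_m)x_{n_m}^{\ast}p_m=0,
\]
so $(y_k)_{k\ge 1}$ is pairwise orthogonal.

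By Assumption (ii) of Theorem \ref{2-main-result}, either $\|y_k\|_{E(\mathcal{M})}\to 0$, or $(y_k)_{k\ge 1}$ admits a $2$-co-lacunary subsequence $(y_{k_j})_{j\ge 1}$. In the first case, the triangle inequality yields $\|x_{n_k}\|_{E(\mathcal{M})}\le\|x_{n_k}-y_k\|_{E(\mathcal{M})}+\|y_k\|_{E(\mathcal{M})}\to 0$, so $(x_{n_k})$ is itself a norm-convergent subsequence of $(x_n)$. In the second case, the bound $\sum_j\|x_{n_{k_j}}-y_{k_j}\|_{E(\mathcal{M})}<\infty$ combined with Lemma \ref{perturbation}(ii) produces an $m$ for which $(x_{n_{k_j}})_{j\ge m}$ is $2$-co-lacunary, which is the required subsequence.

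The only delicate point is reconciling the form of orthogonality produced by Lemma \ref{measure lem}, namely $y_k^{\ast}y_m=y_ky_m^{\ast}=0$ for $k\ne m$, with the paper's stated definition of pairwise orthogonality; once one verifies that this is indeed the operative notion in Assumption (ii) (as is standard in the noncommutative literature), the remainder of the argument is a clean splice of Lemma \ref{measure lem}, the hypothesis, and the perturbation Lemma \ref{perturbation}.
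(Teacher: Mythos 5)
Your overall strategy is the same as the paper's: apply Lemma \ref{measure lem} to the measure-null sequence, arrange $\sum_k\|x_{n_k}-y_k\|_{E(\mathcal{M})}<\infty$ with $y_k=p_kx_{n_k}q_k$, invoke Assumption (ii), and finish with Lemma \ref{perturbation}\eqref{p-lem-2}. However, the ``delicate point'' you flag at the end is a genuine gap, and you do not close it. The paper's definition of pairwise orthogonality requires $x_nx_m=x_n^{\ast}x_m=0$ for $m\neq n$. For your $y_k$ you verify $y_k^{\ast}y_m=0$ (from $p_kp_m=0$) and $y_ky_m^{\ast}=0$ (from $q_kq_m=0$), but
\[
y_ky_m=p_kx_{n_k}\,(q_kp_m)\,x_{n_m}q_m
\]
need not vanish: Lemma \ref{measure lem} gives no relation between the family $(q_k)$ and the family $(p_m)$, so $q_kp_m$ is in general nonzero. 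Hence $(y_k)$ is not a sequence of pairwise orthogonal elements in the stated sense, and Assumption (ii) cannot be applied to it directly. Saying that the weaker two-sided disjoint-support condition ``is the operative notion'' is an assertion, not a verification, and it is precisely the step the paper takes care to avoid.

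The fix — which is what the paper does — requires one more idea. Pass to the moduli $|y_k|=(q_kx_{n_k}^{\ast}p_kx_{n_k}q_k)^{1/2}$: these are positive operators supported on the mutually orthogonal projections $q_k$, so they are genuinely pairwise orthogonal, and (after first reducing to the semi-normalized case) Assumption (ii) yields a $2$-co-lacunary subsequence $(|y_{k_l}|)_{l\geq1}$. One then transfers the lower $2$-estimate back to $(y_{k_l})$ via the operator identity
\[
\Bigl|\sum_l\lambda_l y_{k_l}\Bigr|^2=\sum_l|\lambda_l|^2|y_{k_l}|^2=\Bigl|\sum_l\lambda_l|y_{k_l}|\Bigr|^2,
\]
valid because the $p$'s kill the off-diagonal terms in the first sum and the $q$'s kill them in the second; since the $E(\mathcal{M})$-norm depends only on the modulus, $\bigl\|\sum_l\lambda_l y_{k_l}\bigr\|_{E(\mathcal{M})}=\bigl\|\sum_l\lambda_l|y_{k_l}|\bigr\|_{E(\mathcal{M})}$, and $(y_{k_l})$ is $2$-co-lacunary. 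With this insertion your argument (including your cleaner handling of the convergent case via the triangle inequality) is correct; without it, the application of Assumption (ii) is unjustified.
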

\begin{proof} If the sequence $(x_n)_{n\geq1}$ converges to $0$ in $E(\mathcal{M}),$ then the assertion follows. Otherwise, passing to a subsequence if needed, we may assume that the sequence $(x_n)_{n\geq1}$ is semi-normalised.

By Lemma \ref{measure lem}, there are a subsequence $(x_{n_k})_{k\geq1}$ and sequences $(p_k)_{k\geq1}$ and $(q_k)_{k\geq1}$ of mutually orthogonal projections in $\mathcal{M}$ such that
\begin{equation}\label{xy1}
\|x_{n_k}-p_kx_{n_k}q_k\|_{E(\mathcal{M})}\to 0.
\end{equation}
Denote for brevity $u_k=p_kx_{n_k}q_k.$ According to \eqref{xy1}, passing to a further subsequence if it is necessary, we may assume
$$\sum_{l\geq1}\|x_{n_k}-u_k\|_{E(\mathcal{M})}<\infty.$$

Since the sequence $(x_n)_{n\geq1}$ is semi-normalised, $(u_k)_{k\geq1}$ is also semi-normalized. So is the sequence $(|u_k|)_{k\geq1}.$ The latter sequence consists of pairwise orthogonal elements. By Assumption (ii) of Theorem \ref{2-main-result}, there exists a 2-co-lacunary subsequence $(|u_{k_l}|)_{l\geq1}.$ Observe that for any $(\lambda_l)_{l\geq1}\subset \mathbb{C}$,
\begin{align*}
|\sum_l\lambda_l u_{k_l}|^2&=\sum_{l_1,l_2}\overline{\lambda_{l_1}}\lambda_{l_2} q_{k_{l_1}}x_{n_{k_{l_1}}}^{\ast}p_{k_{l_1}}p_{k_{l_2}}x_{n_{k_{l_2}}}q_{k_{l_2}}\\
&=\sum_l|\lambda_l|^2 q_{k_l}x_{n_{k_l}}^{\ast}p_{k_l}x_{n_{k_l}}q_{k_l}=\sum_l|\lambda_l|^2|u_{k_l}|^2=|\sum_l\lambda_l|u_{k_l}||^2.
\end{align*}
Since $(|u_{k_l}|)_{l\geq1}$ is 2-co-lacunary, it follows that
$$\Big(\sum_{l\geq1}|\lambda_l|^2\Big)^{1/2}\leq c\Big\|\sum_l\lambda_l|u_{k_l}|\Big\|_{E(\mathcal{M})}=c\Big\|\sum_l\lambda_lu_{k_l}\Big\|_{E(\mathcal{M})}.$$
This means the sequence $(u_{k_l})_{l\geq1}$ is 2-co-lacunary.

By Lemma \ref{perturbation}\eqref{p-lem-2}, the subsequence $(x_{n_{k_l}})_{l\geq1}$ is 2-co-lacunary. Thus $(x_n)_{n\geq1}$  contains a 2-co-lacunary subsequence.
\end{proof}

\begin{lemma}\label{Fatou corollary} Let $E(0,\infty)$ be an order continuous  symmetric Banach function space. If every sequence of pairwise orthogonal elements in $E(0,\infty)$ admits either a convergent subsequence or a $2$-co-lacunary subsequence, then $E(0,\infty)$ has the Fatou property.
\end{lemma}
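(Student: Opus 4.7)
The plan is to argue by contrapositive: assuming $E(0,\infty)$ fails the Fatou property, I will construct a pairwise disjoint sequence in $E(0,\infty)$ with neither a norm-convergent subsequence nor a $2$-co-lacunary subsequence, contradicting the hypothesis.

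The first step converts the failure of Fatou into a concrete structural obstruction. Since the norm on $E$ is order continuous, the Fatou property is equivalent to $E$ being a KB-space (every norm-bounded positive increasing sequence in $E$ converges in norm): Fatou plus order continuity forces such convergence, and the converse direction is routine. A classical result in Banach lattice theory (see e.g.\ Meyer-Nieberg, \emph{Banach Lattices}, Theorem~2.4.12) identifies KB-spaces as exactly those Banach lattices containing no isomorphic copy of $c_0$. Hence the failure of Fatou yields an isomorphic embedding $T : c_0 \hookrightarrow E(0,\infty)$.

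The second step arranges the image of the $c_0$-basis to be genuinely disjoint. The sequence $x_k := T(e_k)$ is semi-normalized and weakly null in $E$. By the Kadec--Pe{\l}czy\'nski disjointification principle, available for order continuous symmetric Banach function spaces, one can extract a subsequence $(x_{k_j})$ together with a disjointly supported sequence $(u_j)$ in $E(0,\infty)$ such that $\sum_{j\ge 1}\|x_{k_j}-u_j\|_E$ is as small as desired. The standard small-perturbation principle for basic sequences then gives that $(u_j)$ is itself equivalent to the unit vector basis of $c_0$:
\[
c_1 \sup_j |\lambda_j| \;\le\; \Bigl\|\sum_j \lambda_j u_j\Bigr\|_E \;\le\; c_2 \sup_j |\lambda_j|.
\]

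Finally, $(u_j)$ is pairwise disjoint, hence pairwise orthogonal in $E(0,\infty)$. The lower $c_0$-estimate gives $\|u_j-u_\ell\|_E \ge c_1$ for $j\neq\ell$, so no subsequence of $(u_j)$ is Cauchy and there is no norm-convergent subsequence. On the other hand, for any subsequence, the upper $c_0$-estimate applied to $\lambda_{j_1}=\dots=\lambda_{j_N}=1$ gives $\|u_{j_1}+\dots+u_{j_N}\|_E\le c_2$, whereas $2$-co-lacunarity would require $\sqrt{N}\le C\cdot c_2$, which fails for large $N$. This contradicts the hypothesis. The main obstacle is pinpointing the right form of the Kadec--Pe{\l}czy\'nski disjointification for order continuous symmetric spaces on $(0,\infty)$ under the assumption $E\subset(L_1+L_2)(0,\infty)$; the remainder is routine Banach-lattice and basic-sequence bookkeeping.
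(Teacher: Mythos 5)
Your overall strategy is the same as the paper's, just unpacked: the paper's entire proof consists of the observation that, under the hypothesis, no disjointly supported sequence in $E$ can span $c_0$ (which is exactly your final paragraph: such a sequence is semi-normalized and disjoint, so has no convergent subsequence, and the upper $c_0$-estimate $\|\sum_{i=1}^N u_{j_i}\|_E\le c_2$ is incompatible with the lower $\ell_2$-estimate $\sqrt N\le Cc_2$), followed by a citation of \cite[Theorem 6.5, (v)$\Rightarrow$(i)]{DoPaS2016} to conclude the Fatou property. What you are doing in your first two steps is re-proving that cited implication via the KB-space characterization. That part of your plan is sound in outline: for a Banach function space, KB does imply Fatou (given $f_n\to f$ a.e.\ with bounded norms, apply the KB property to $g_m=\inf_{k\ge m}|f_{n_k}|\uparrow |f|$), and Meyer-Nieberg's theorem converts failure of KB into an isomorphic copy of $c_0$.

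The gap is exactly where you flag it: the passage from ``$c_0$ embeds isomorphically'' to ``$c_0$ embeds as a disjointly supported sequence.'' The Kadec--Pe{\l}czy\'nski principle for order continuous Banach function spaces is a \emph{dichotomy}: a semi-normalized weakly null sequence has a subsequence which is either a small perturbation of a disjoint sequence \emph{or} has uniformly absolutely continuous norms, and you never rule out the second horn (being weakly null already makes the set relatively weakly compact, so weak compactness arguments give nothing for free here). As written, the disjointification step therefore does not go through. The fix is cheap and lies inside the very theorem you cite: Meyer-Nieberg's Theorem 2.4.12 asserts that a Banach lattice fails to be a KB-space if and only if $c_0$ embeds \emph{as a sublattice}, and a lattice embedding $T$ sends the pairwise disjoint vectors $e_k$ to pairwise disjoint elements $Te_k$ (since $|Te_j|\wedge|Te_k|=T(e_j\wedge e_k)=0$). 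So you obtain the disjoint semi-normalized $c_0$-sequence directly, with no need for Kadec--Pe{\l}czy\'nski and no perturbation argument. Two minor points: the lemma does not assume $E\subset(L_1+L_2)(0,\infty)$, so that hypothesis should not enter your disjointification step; and only the implication ``KB $\Rightarrow$ Fatou'' is needed, not the full equivalence you assert.
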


\begin{proof}
By the assumption, no sequence of disjointly supported elements in $E$ spans $c_0.$ By \cite[Theorem 6.5(v)$\Rightarrow $(i)]{DoPaS2016}, $E$ has the Fatou property.
\end{proof}

We are now ready to prove our main result Theorem \ref{2-main-result}.

Recall that $E$ is an order continuous symmetric Banach function space such that $ E\subset (L_1+L_2)(0,\infty).$ Assume that $\mathcal{M}$ is semifinite and hyperfinite.

\begin{proof}[Proof of Theorem \ref{2-main-result}] The implication ${\rm (i)}\Rightarrow {\rm (ii)}$ is easy. Indeed, let  $(x_n)_{n\geq1}$ in $E(\mathcal{M})$ be a bounded sequences, and both right and left disjointly supported. If $(x_n)_{n\geq1}$ is convergent in $E(\mathcal{M}),$ then it converges to zero. However, $(x_n)_{n\geq1}$ is semi-normalized; hence, it does not have a subsequence which converges to $0.$ Thus, (ii) follows from  (i).

We now concentrate on the implication ${\rm (ii)}\Rightarrow {\rm (i)}.$ It suffices to consider a sequence $(x_n)_{n\geq1}$ in $E(\mathcal{M})$ which is semi-normalised.

\textbf{Case A}: Suppose that the sequence $(x_n)_{n\geq1}$ converges to zero in $(L_1+L_2)(\mathcal{M}).$

In this case, the sequence $(x_n)_{n\geq1}$ converges to zero in measure, and hence the application of Lemma \ref{to0 in measure} yields the assertion of Theorem \ref{2-main-result}.

\textbf{Case B}: Suppose that the sequence $(x_n)_{n\geq1}$ does not converge to zero in $(L_1+L_2)(\mathcal{M}).$

Choose $\delta>0$ and a subsequence $x_{n_k}$ such that
$$\lim_{k\to\infty}\|x_{n_k}\|_{L_1+L_2}=\delta.$$
By Corollary \ref{l1+l2 cor}, the sequence $(x_{n_k})_{k\geq1}$ either contains a subsequence $(x_{n_{k_l}})$ which is either 2-co-lacunary in $(L_1+L_2)(\mathcal{M})$ or converges in $(L_1+L_2)(\mathcal{M}).$

If $(x_{n_{k_l}})$ is 2-co-lacunary in $(L_1+L_2)(\mathcal{M}),$ then for any complex sequence $(\lambda_l)_l$,
$$\Big\|\sum_{l\geq1}\lambda_lx_{n_{k_l}}\Big\|_E\geq \Big\|\sum_{l\geq1}\lambda_lx_{n_{k_l}}\Big\|_{L_1+L_2}\gtrsim  \Big(\sum_{l\geq1} |\lambda_{l}|^2\Big)^{1/2}.$$
Hence, the sequence $(x_{n_{k_l}})$ is 2-co-lacunary in $E(\mathcal{M}).$

If $(x_{n_{k_l}})$ is convergent in $(L_1+L_2)(\mathcal{M}),$ then we denote the limit by $x.$ By Assumption (ii) of Theorem \ref{2-main-result} and Lemma \ref{Fatou corollary}, $E$ has the Fatou property and, hence, so does $E(\mathcal{M})$ (see Remark \ref{rem-fully}). By the Fatou property, $x\in E(\mathcal{M}).$ Set $y_l=x_{n_{k_l}}-x\in E(\mathcal{M}).$ Note that $(y_l)_{l\geq1}$ converges to $0$ in measure. If $y_l\to0$ in $E(\mathcal{M}),$ then $x_{n_{k_l}}\to x$ in $E(\mathcal{M}).$

If $(y_l)_{l\geq1}$ does not converge to $0$ in $E(\mathcal{M}),$ then, by Lemma \ref{to0 in measure}, we construct a 2-co-lacunary subsequence of the sequence $(y_l)_{l\geq1}.$ By Lemma \ref{perturbation}\eqref{p-lem-1}, there is a 2-co-lacunary subsequence of the sequence $(x_{n_{k_l}})_{l\geq1}.$
 This completes the proof in Case B.
\end{proof}

At the end of this section,  we present some concluding remarks for Theorem \ref{2-main-result}.
 First of all, we demonstrate that the assumption $E\subset (L_1+L_2)(0,\infty)$ is necessary for the validity of Theorem \ref{2-main-result}.

 Let $E(0,\infty)$ be a symmetric Banach function space. If every bounded sequence $(f_n)_{n\geq1}\subset E(0,\infty)$ admits either a convergent subsequence or a 2-co-lacunary subsequence, then $E\subset (L_1+L_2)(0,\infty)$. Indeed, take $f\in E(0,\infty)$.  We have
\begin{align*}
\|f\|_{E}=\|\mu(f)\|_E\geq \Big\| \sum_{k\geq0} \mu_{k+1}(f) \chi_{[k,k+1)}\Big\|_{E}.
\end{align*}
Since the sequence $(\chi_{[k,k+1)})_{k\geq0}$  is pairwise disjoint and semi-normalized, it does not have a convergent subsequence. Hence, by the assumption, it is 2-co-lacunary. Then we have
\begin{align*}
\|f\|_{E}&\gtrsim \Big(\sum_{k\geq0} \mu_{k+1}(f)^2\Big)^{1/2}\geq \Big(\sum_{k\geq0} \int_{k+1}^{k+2}\mu_s(f)^2ds\Big)^{1/2}=\Big(\int_1^\infty \mu_s(f)^2ds\Big)^{1/2}
\end{align*}
Observe that $\|\mu(f)\chi_{(0,1)}\|_1\overset{\eqref{inclusions}}\leq \|\mu(f)\chi_{(0,1)}\|_E \leq \|\mu(f)\|_E=\|f\|_E$. Hence, by \eqref{Hol-1}, we have
$$\|f\|_{L_1+L_2}\approx \int_0^1\mu_s(f)\chi_{(0,1)} ds+ \Big(\int_1^\infty \mu_s(f)^2ds\Big)^{1/2}
\lesssim \|f\|_E,$$
which means $E\subset (L_1+L_2)(0,\infty)$.

We give an example to show that there exists an order continuous symmetric function space   $E\subset(L_1+L_2)(0,\infty)$ cannot contain a $2$-co-lacunary subsequence.
Let us assume that $E=M_\psi^0$ is a ``separable part"  of Marcinkiewicz space $M_\psi$, where $\psi$ is continuous concave function on $[0,\infty)$ such that $\psi(0)=0$ (see e.g. \cite{KPS1982} and \cite{AS2007}) such that $M_\psi\subset (L_1+L_2)(0,\infty)$.
By \cite[Proposition 2.1]{AS2007}, we know that any normalized  disjointly supported sequence in $E$ for which every member is a scalar multiple of a characteristic function of a measurable subset of $(0,1)$ contains a subsequence equivalent a standard vector basis in $c_0$. Hence, such a subsequence cannot contain a $2$-co-lacunary subsequence.

Recall that a Banach lattice $X$ is said to satisfy a lower $p$-estimate ($1<p<\infty$)  (see, e.g., \cite[Definition 1.f.4]{LiTz1979}) if there is a constant $M$ such that for every choice of pairwise disjoint elements $(x_i)_{i=1}^n \subset X$, we have
$$\Big(\sum_{i=1}^n\|x_i\|_X^p\Big)^{1/p}\leq M \Big\|\sum_{i=1}^nx_i\Big\|_X.$$
Now consider the von Neumann algebra $B(H)$, where $H$ is a separable Hilbert space, and the standard trace $\operatorname{tr}$. Our symmetric spaces $E(B(H),\operatorname{tr})$ associated to the algebra become ideals of $B(H)$.
For every symmetric sequence space $E$ satisfying lower $2$-estimate, the ideal $C_E=E(B(H))$ is contained between the Schatten classes $C_1$ and $C_2$.
 See, for example, \cite[Part~II]{LSZ2013} for extensive discussion of the correspondence between sequence spaces and ideals of $B(H)$.
We note that by \cite[Proposition~2.3]{Su1996}, we have that for a symmetric sequence space $E$, any pairwise disjoint sequence in $C_E$ is isometrically isomorphic to disjoint basic sequence in $E$. Theorem~\ref{2-main-result} may now be rephrased in terms of ideals.

\begin{corollary} Let symmetric sequence space $E$ be separable. If $E$ satisfies lower $2$-estimate, then every bounded sequence $(x_n)_{n\geq1}$ in the ideal $C_E$ admits either a convergent subsequence, or  a $2$-co-lacunary subsequence.
\end{corollary}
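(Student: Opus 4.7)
The plan is to reduce the corollary to Theorem \ref{2-main-result} applied to $\mathcal{M}=B(H)$ equipped with the standard trace. The algebra $B(H)$ is the type $\mathrm{I}_\infty$ factor on the separable Hilbert space $H$, so it is both semifinite and hyperfinite; thus the only hypotheses of Theorem \ref{2-main-result} we need to verify are that the symmetric sequence space $E$ (interpreted as a symmetric function space on $(0,\infty)$ via its standard realization on step functions, so that $E(B(H))=C_E$) is order continuous, sits inside $(L_1+L_2)(0,\infty)$, and that condition (ii) holds for pairwise orthogonal sequences in $C_E$. Order continuity is immediate from the separability assumption on $E$.

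To obtain $E\subset (L_1+L_2)(0,\infty)$, I would take any $a=(a_n)\in E$ with $|a_1|\geq |a_2|\geq \cdots$, and control the two pieces appearing in Holmstedt's formula \eqref{Hol-1} separately. The ideal property of $E$ in $\ell_\infty$ yields $|a_1|\lesssim \|a\|_E$, which bounds $\int_0^1\mu_s(a)\,ds$. For the tail, I apply the lower $2$-estimate to the pairwise disjoint sequence $(a_n e_n)_{n\geq 2}$:
\begin{equation*}
\Big(\sum_{n\geq 2}|a_n|^2\Big)^{1/2}\leq M\Big\|\sum_{n\geq 2}a_n e_n\Big\|_E\leq M\|a\|_E,
\end{equation*}
which controls $\bigl(\int_1^\infty\mu_s(a)^2\,ds\bigr)^{1/2}$. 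Holmstedt's formula then gives $\|a\|_{L_1+L_2}\lesssim \|a\|_E$.

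For condition (ii), let $(y_n)_{n\geq 1}$ be a pairwise orthogonal sequence in $C_E$ that does not converge to zero in norm. Passing to a subsequence, I may assume it is semi-normalized, so $\inf_n \|y_n\|_{C_E}\geq c>0$. By \cite[Proposition~2.3]{Su1996} cited in the paper, pairwise orthogonality of $(y_n)$ in $C_E$ implies that it is isometrically equivalent to a disjointly supported sequence $(f_n)$ in $E$ with $\|f_n\|_E=\|y_n\|_{C_E}$. Applying the lower $2$-estimate to the disjoint finite sums yields, for any scalars $(\lambda_n)$,
\begin{equation*}
\Big\|\sum_n \lambda_n y_n\Big\|_{C_E}=\Big\|\sum_n \lambda_n f_n\Big\|_E\geq M^{-1}\Big(\sum_n |\lambda_n|^2\|f_n\|_E^2\Big)^{1/2}\geq \frac{c}{M}\Big(\sum_n|\lambda_n|^2\Big)^{1/2},
\end{equation*}
so $(y_n)$ is itself $2$-co-lacunary, which verifies condition (ii) of Theorem \ref{2-main-result}.

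With all hypotheses of Theorem \ref{2-main-result} in hand, its conclusion immediately yields the dichotomy asserted by the corollary for any bounded sequence in $C_E$. I expect no serious obstacles here; the only conceptual care needed is in cleanly identifying the symmetric sequence space $E$ with the corresponding symmetric function space realized on the atoms of the standard trace on $B(H)$, so that $E(B(H))=C_E$. The lower $2$-estimate conveniently does double duty: it produces both the inclusion $E\subset (L_1+L_2)(0,\infty)$ and the $2$-co-lacunarity of disjoint, hence pairwise orthogonal, sequences.
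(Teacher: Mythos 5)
Your proposal is correct and follows essentially the same route as the paper: reduce to Theorem \ref{2-main-result} with $\mathcal{M}=B(H)$, use the lower $2$-estimate to get $E\subset \ell_2$ (hence the step-function realization lies in $(L_1+L_2)(0,\infty)$ and $C_1\subset C_E\subset C_2$), and invoke \cite[Proposition~2.3]{Su1996} to transfer pairwise orthogonal sequences in $C_E$ to disjoint sequences in $E$, where the lower $2$-estimate gives $2$-co-lacunarity directly. The only cosmetic point is that a non-null orthogonal sequence need not admit a semi-normalized subsequence (it could be unbounded), but your argument only uses the lower bound $\inf_n\|y_n\|_{C_E}\geq c>0$, so nothing is affected.
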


\section{Proof of Theorem \ref{main theorem 1}}\label{sec-31}

In this section, we prove  Theorem \ref{main theorem 1}, which builds upon Pisier's argument in \cite[Page 251]{Rand2006}.

Let $0<q<\infty$. A quasi-Banach space $(X,\|\cdot\|_X)$ is $q$-concave (see e.g. \cite[Definition 1.d.3]{LiTz1979}) if for every finite sequence $(x_k)_{k=1}^n\subset X$ there exists a constant $K_q$ such that
\begin{equation*}\label{cotype}
\Big(\sum_{i=1}^n\|x_i\|_X^q\Big)^{1/q}\leq K_q\Big\|\Big(\sum_{i=1}^n|x_i|^q\Big)^{1/q}\Big\|_X.
\end{equation*}
We need to show that for each $0<p<1$, $(L_p+L_2)(\mathcal{M})$ is $2$-concave. It was shown in \cite[Theorem 3.8]{DDS2014} that if a symmetric Banach space $E(0,\infty)$ is $q$-concave, then  $E(\mathcal{M})$ is $q$-concave. However, we may not apply \cite[Theorem 3.8]{DDS2014} here, since $(L_p+L_2)(0,\infty)$ is  quasi-Banach.

\begin{lemma}\label{cor-2-concave}
Suppose that $0<p\leq 2$ and $\mathcal{M}$ is a semifinite von Neumann algebra. Then
$(L_p+L_2)(\mathcal{M})$ is $2$-concave, i.e., that is there exists a constant $c_p$ such that for all $n\geq1$ and $(x_k)_{k=1}^n\subset (L_p+L_2)(\mathcal{M})$, we have
$$\Big(\sum_{k=1}^n\|x_k\|_{(L_p+L_2)(\mathcal{M})}^2\Big)^{1/2}\leq c_p\Big\|\Big(\sum_{k=1}^n|x_k|^2\Big)^{1/2}\Big\|_{(L_p+L_2)(\mathcal{M})}.$$
\end{lemma}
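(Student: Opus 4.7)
My plan is as follows. The case $p=2$ is immediate since $(L_2+L_2)(\mathcal{M})=L_2(\mathcal{M})$ is a Hilbert space. For $0<p<2$, I would spectrally decompose $X:=(\sum_{k=1}^n|x_k|^2)^{1/2}$ and transfer the splitting to each $x_k$, so as to decouple the $L_p$-side from the $L_2$-side. Concretely, set $\lambda_*:=\mu_1(X)$ and $e:=\chi_{(\lambda_*,\infty)}(X)$ (so $\tau(e)\le 1$), and put $X_1:=Xe$, $X_2:=X(1-e)$, $y_k:=x_ke$, $z_k:=x_k(1-e)$, so that $x_k=y_k+z_k$. Since $e$ commutes with $X$,
$$\sum_k|y_k|^2=e\Big(\sum_k|x_k|^2\Big)e=eX^2e=X_1^2,\qquad\sum_k|z_k|^2=X_2^2.$$
A standard Holmstedt-type estimate based on \eqref{Hol-1}, combined with the bound $\lambda_*\le\bigl(\int_0^1\mu_t(X)^p\,dt\bigr)^{1/p}$, then yields $\|X_1\|_p+\|X_2\|_2\lesssim\|X\|_{(L_p+L_2)(\mathcal{M})}$ with constant depending on $p$.

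The $L_2$-side collapses by traciality: $\sum_k\|z_k\|_2^2=\tau(X_2^2)=\|X_2\|_2^2$. For the $L_p$-side, setting $Y_k:=|y_k|^2\ge 0$ and $r:=p/2\in(0,1]$, the desired bound $\sum_k\|y_k\|_p^2\le\|X_1\|_p^2$ reads, via $\|y_k\|_p^2=\|Y_k\|_r$ and $\|X_1\|_p^2=\|\sum_kY_k\|_r$, as $\sum_k\|Y_k\|_r\le\|\sum_k Y_k\|_r$. This is the noncommutative \emph{reverse Minkowski inequality} for positive operators in $L_r(\mathcal{M})$, which I would prove with constant $1$ from the operator concavity of $t\mapsto t^r$ on $[0,\infty)$ (L\"owner--Heinz): for positive $A,B$ with $\|A\|_r=a$ and $\|B\|_r=b$, write $A+B=(a+b)(\alpha\tilde A+(1-\alpha)\tilde B)$ with $\alpha=a/(a+b)$ and $\tau(\tilde A^r)=\tau(\tilde B^r)=1$; operator concavity then gives $\tau((A+B)^r)\ge(a+b)^r\tau(\alpha\tilde A^r+(1-\alpha)\tilde B^r)=(a+b)^r$, i.e., $\|A+B\|_r\ge\|A\|_r+\|B\|_r$, and one iterates.

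To close, since $x_k=y_k+z_k$, $\|x_k\|_{(L_p+L_2)(\mathcal{M})}\le\|y_k\|_p+\|z_k\|_2$, so by Minkowski in $\ell^2$ together with the previous two steps,
$$\Bigl(\sum_k\|x_k\|_{(L_p+L_2)(\mathcal{M})}^2\Bigr)^{1/2}\le\Bigl(\sum_k\|y_k\|_p^2\Bigr)^{1/2}+\Bigl(\sum_k\|z_k\|_2^2\Bigr)^{1/2}\le\|X_1\|_p+\|X_2\|_2\lesssim\|X\|_{(L_p+L_2)(\mathcal{M})}.$$
The only delicate ingredient is the noncommutative reverse Minkowski step for $r\le 1$; this is precisely where the quasi-Banach nature of $(L_p+L_2)(0,\infty)$ genuinely enters and explains why \cite[Theorem~3.8]{DDS2014} cannot be invoked as a black box. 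All remaining steps are routine manipulations of Holmstedt's formula and the trace.
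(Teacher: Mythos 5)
Your proof is correct, but it takes a genuinely different route from the paper's. You split the square function $X=(\sum_k|x_k|^2)^{1/2}$ by the spectral projection $e=\chi_{(\mu_1(X),\infty)}(X)$, handle the $L_2$-half by traciality, handle the $L_p$-half by the superadditivity of $\|\cdot\|_{r}$ ($r=p/2\le 1$) on the positive cone --- which your L\"owner--Heinz/operator-concavity argument does establish --- and reassemble via Holmstedt's formula \eqref{Hol-1} using $\mu_1(X)\le(\int_0^1\mu_t(X)^p\,dt)^{1/p}$. The paper instead derives $2$-concavity of $L_w(\mathcal M)$ for every $0<w\le 2$ from cotype $2$ of $L_w(\mathcal M)$ plus the noncommutative Khintchine inequality, recasts this as boundedness of the coordinate map $\Theta:L_w(\mathcal M\bar\otimes B(\ell_2))\to\ell_2(L_w(\mathcal M))$ restricted to the column subspace via the isometry $J$, and then passes to $(L_p+L_2)$ by the elementary interpolation property of sum spaces. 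Your approach is more elementary and self-contained (no Khintchine, no cotype, essentially explicit constants) and correctly pinpoints where the quasi-norm regime $r\le1$ and positivity enter; the paper's approach is shorter given the cited machinery and dovetails with the Khintchine-based Lemma \ref{cotype lemma} that follows. In a full write-up you should still record two routine verifications: that $\mu_s(X_2)=\mu_{s+\tau(e)}(X)$, which gives $\|X_2\|_2^2\le \mu_1(X)^{2-p}\int_0^1\mu_t(X)^p\,dt+\int_1^\infty\mu_t(X)^2\,dt$ and hence the claimed Holmstedt-type bound, and that the operator concavity inequality extends from bounded to $\tau$-measurable positive operators by monotone approximation before taking traces.
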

\begin{proof}
 Using the fact that $L_w(\mathcal{M})$ is cotype 2 for any $0<w\leq 2$ (see e.g. \cite[Corollary 5.5]{PX2003}), \cite[Theorem 1.e.13]{LiTz1979} and noncommutative Khintchine (see e.g. \cite[Remark 3.3]{Cai2018}), we have
\begin{align}\label{refree1}
\Big(\sum_{k=1}^n\|x_k\|_{w}^2\Big)^{1/2}&\leq  c_w \int_0^1 \Big\|\sum_{k=1}^nr_k(t)x_k\Big\|_{w}dt \nonumber \\
&\leq c_w \Big(\int_0^1 \Big\|\sum_{k=1}^nr_k(t)x_k\Big\|_{w}^wdt\Big)^{1/w} \nonumber \\
&\leq c_w \Big\|\Big(\sum_{k=1}^n|x_k|^2\Big)^{1/2}\Big\|_{w},
\end{align}
where $\{r_k\}_{k\ge 1}$ is the sequence of Rademacher functions.

On the other hand, it is well known that for any symmetric quasi-Banach space $E$, the map $J: E(\mathcal{M},\ell_2^c)\rightarrow E(\mathcal{M}\bar{\otimes}B(\ell_2))$ defined by setting $J((a_k)_{k\geq1})=\sum_{k\geq1}a_k\otimes e_{k,1}$ is an isometry.

It now follows from \eqref{refree1} that for every $0<w\leq 2$, the map $\Theta: L_w(\mathcal{M}\bar{\otimes}B(\ell_2))\rightarrow\ell_2(L_w(\mathcal{M}))$ defined by
$$\Theta((a_{i,j})_{i,j\geq1})=(a_{i,1})_{i\geq1}$$
is bounded. In particular, we have that $\Theta$ is simultaneously bounded as maps $\Theta: L_p(\mathcal{M}\bar{\otimes}B(\ell_2))\rightarrow\ell_2((L_p+L_2)(\mathcal{M}))$
and $\Theta: L_2(\mathcal{M}\bar{\otimes}B(\ell_2))\rightarrow\ell_2((L_p+L_2)(\mathcal{M}))$.
This implies that
$$\Theta: (L_p+L_2)(\mathcal{M}\bar{\otimes}B(\ell_2))\rightarrow \ell_2((L_p+L_2)(\mathcal{M}))$$
is bounded. Composing this with the map $J$, we conclude
$$\Theta\circ J: (L_p+L_2)(\mathcal{M}\bar{\otimes}B(\ell_2))\rightarrow \ell_2((L_p+L_2)(\mathcal{M}))$$
is bounded. This actually means $(L_p+L_2)(\mathcal{M})$ is 2-concave.
\end{proof}

We shall apply the noncommutative Khintchine inequality established in \cite{Cai2018}.

\begin{lemma}\label{cotype lemma}
Suppose that $0<p\leq 2$ and $\mathcal{M}$ is a semifinite von Neumann algebra.
For every finite sequence $(x_k)\subset (L_p+L_2)(\mathcal{M})$, we have
$$\Big\|\sum_{k\geq1}r_k\otimes x_k\Big\|_{(L_p+L_2)(L_\infty(0,1)\bar{\otimes} \mathcal{M})}\geq c_p\Big(\sum_{k\geq1}\|x_k\|_{(L_p+L_2)(\mathcal{M})}^2\Big)^{\frac12},$$
where $\{r_k\}_{k\ge 1}$ is the sequence of Rademacher functions.
\end{lemma}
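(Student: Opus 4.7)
The approach combines the $2$-concavity of $(L_p+L_2)(\mathcal{M})$ established in Lemma~\ref{cor-2-concave} with the lower half of the noncommutative Khintchine decomposition theorem for symmetric quasi-Banach spaces from \cite{Cai2018}. For $0<p\leq 2$, that result (applied in the symmetric quasi-Banach setting of $(L_p+L_2)$) supplies an equivalence of the form
\[
\Big\|\sum_{k}r_k\otimes x_k\Big\|_{(L_p+L_2)(L_\infty(0,1)\bar{\otimes}\mathcal{M})}\approx \inf_{x_k=a_k+b_k}\Big\{\Big\|\Big(\sum_k|a_k|^2\Big)^{1/2}\Big\|_{(L_p+L_2)(\mathcal{M})}+\Big\|\Big(\sum_k|b_k^{*}|^2\Big)^{1/2}\Big\|_{(L_p+L_2)(\mathcal{M})}\Big\},
\]
where the infimum runs over all decompositions $x_k=a_k+b_k$ in $(L_p+L_2)(\mathcal{M})$; in particular, the infimum on the right is controlled from above by the Rademacher norm on the left.

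Given any such decomposition $x_k=a_k+b_k$, the quasi-triangle inequality for $\|\cdot\|_{(L_p+L_2)(\mathcal{M})}$ gives $\|x_k\|^2\lesssim \|a_k\|^2+\|b_k\|^2$ with an absolute constant, and hence
\[
\Big(\sum_k \|x_k\|_{(L_p+L_2)(\mathcal{M})}^2\Big)^{1/2}\lesssim \Big(\sum_k \|a_k\|^2\Big)^{1/2}+\Big(\sum_k \|b_k\|^2\Big)^{1/2}.
\]
Since the quasi-norm on $(L_p+L_2)(\mathcal{M})$ is symmetric we have $\|b_k\|=\|b_k^{*}\|$ for every $k$. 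Applying Lemma~\ref{cor-2-concave} separately to the two finite sequences $(a_k)$ and $(b_k^{*})$ then bounds each of the sums on the right by the column, respectively row, square-function quasi-norm that appears on the right-hand side of the Khintchine equivalence above.

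Taking the infimum over all decompositions $x_k=a_k+b_k$ and inserting the Khintchine lower estimate now yields the desired inequality. The main step I expect to require the most care is the invocation of the lower half of the Khintchine decomposition inequality of \cite{Cai2018} in the quasi-Banach setting $(L_p+L_2)(\mathcal{M})$ across the full range $0<p\leq 2$; once this is in hand, the argument reduces to the routine application of Lemma~\ref{cor-2-concave} described above. An alternative route would attempt to split $x_k$ through Holmstedt's identity~\eqref{Hol-1} into an $L_p$- and an $L_2$-component and apply the already-known Khintchine inequalities in each component separately, but the route through a single Khintchine statement for $(L_p+L_2)(\mathcal{M})$ is both cleaner and keeps the constants explicit.
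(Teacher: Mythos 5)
Your proposal is correct and follows essentially the same route as the paper's own proof: both invoke the noncommutative Khintchine equivalence for $(L_p+L_2)$ from \cite{Cai2018}, apply the $2$-concavity of Lemma \ref{cor-2-concave} to the column and row parts of an arbitrary decomposition $x_k=a_k+b_k$, and finish with the quasi-triangle inequality before taking the infimum. The only difference is the order in which these steps are presented.
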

\begin{proof} According to \cite[Remark 3.3]{Cai2018},  the noncommutative Khintchine inequality holds in the space $L_p+L_2.$ That is
\begin{align*}
\Big\|\sum_{k\geq1}r_k&\otimes x_k\Big\|_{(L_p+L_2)(L_\infty(0,1)\bar{\otimes} \mathcal{M})}\approx \\ &\inf_{x_k=y_k+z_k}\Big\{ \Big\|\Big(\sum_{k\geq1} |y_k|^2\Big)^{1/2}\Big\|_{(L_p+L_2)(\mathcal{M})}+\Big\|\Big(\sum_{k\geq1} |z_k^*|^2\Big)^{1/2}\Big\|_{(L_p+L_2)( \mathcal{M})}\Big\}.
\end{align*}
By Lemma \ref{cor-2-concave}, we have that
$$\Big(\sum_{k\geq 1}\|y_k\|_{(L_p+L_2)(\mathcal{M})}^2\Big)^{1/2}\leq c_p\Big\|\Big(\sum_{k\geq 1}|y_k|^2\Big)^{1/2}\Big\|_{(L_p+L_2)(\mathcal{M})}$$
and
$$\Big(\sum_{k\geq 1}\|z_k\|_{(L_p+L_2)(\mathcal{M})}^2\Big)^{1/2}\leq c_p\Big\|\Big(\sum_{k\geq 1}|z_k^*|^2\Big)^{1/2}\Big\|_{(L_p+L_2)(\mathcal{M})}.$$
Thus
\begin{align*}
\Big\|\sum_{k\geq1}r_k\otimes x_k&\Big\|_{(L_p+L_2)(L_\infty(0,1)\bar{\otimes} \mathcal{M})}\gtrsim \\ &\inf_{x_k=y_k+z_k}\Big\{ \Big(\sum_{k\geq 1}\|y_k\|_{(L_p+L_2)(\mathcal{M})}^2\Big)^{1/2}+\Big(\sum_{k\geq 1}\|z_k\|_{(L_p+L_2)(\mathcal{M})}^2\Big)^{1/2}\Big\}.
\end{align*}
By the quasi-triangle inequality, we have that
$$\Big(\sum_{k\geq1}\|y_k\|_{(L_p+L_2)(\mathcal{M})}^2\Big)^{1/2}+\Big(\sum_{k\geq1}\|z_k\|_{(L_p+L_2)(\mathcal{M})}^2\Big)^{1/2}\gtrsim \Big(\sum_{k\geq1}\|x_k\|_{(L_p+L_2)(\mathcal{M})}^2\Big)^{1/2}.$$
The assertion follows by combining the last two inequalities.
\end{proof}

\begin{lemma}\label{martingale lemma}
 Let $\mathcal{M}$ be a semifinite von Neumann algebra. Assume that
$(\mathcal{M}_n)_{n\geq1}$ is an increasing sequence of von Neumann
subalgebras of $\mathcal{M}$ such that the union of the $\mathcal{M}_n$'s is
$w^*$-dense in $\mathcal{M}$ and $(\mathcal{E}_n)_{n\geq1}$ are the corresponding $\tau$-invariant conditional expectations.
Let $x\in (L_1+L_2)(\mathcal{M})$ and $x_k=\mathcal{E}_k(x)-\mathcal{E}_{k-1}(x)$, $k\geq1$. Then, there is a positive constant $c_p$ depends on $p$ such that
$$\Big(\sum_{k\geq1}\|x_k\|_{(L_p+L_2)(\mathcal{M})}^2\Big)^{\frac12}\leq c_p\|x\|_{(L_1+L_2)(\mathcal{M})},\quad 0<p<1.$$
\end{lemma}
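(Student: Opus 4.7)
The plan is to split $x$ into an $L_1$-piece and an $L_2$-piece, treat each by a completely different mechanism (the weak $(1,1)$ bound of Lemma \ref{CR lemma} for the former, Hilbert-space orthogonality for the latter), and harvest the desired $\ell_2$-sum via the Khintchine-type lower bound of Lemma \ref{cotype lemma}. Given $\varepsilon>0$, choose a decomposition $x=a+b$ with $a\in L_1(\mathcal{M})$, $b\in L_2(\mathcal{M})$, and $\|a\|_1+\|b\|_2\le (1+\varepsilon)\|x\|_{(L_1+L_2)(\mathcal{M})}$. Setting $a_k=\mathcal{E}_k(a)-\mathcal{E}_{k-1}(a)$ and $b_k=\mathcal{E}_k(b)-\mathcal{E}_{k-1}(b)$, linearity of the conditional expectations gives $x_k=a_k+b_k$, so it suffices to bound $(\sum_k\|a_k\|_{(L_p+L_2)}^2)^{1/2}$ by $\|a\|_1$ and $(\sum_k\|b_k\|_{(L_p+L_2)}^2)^{1/2}$ by $\|b\|_2$.

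For the $L_1$-piece, Lemma \ref{CR lemma} applied to the martingale differences $(a_k)\subset L_1(\mathcal{M})$ yields
\[
\Big\|\sum_{k\ge1}r_k\otimes a_k\Big\|_{L_{1,\infty}(L_\infty(0,1)\bar{\otimes}\mathcal{M})}\lesssim \sup_n\Big\|\sum_{k=1}^n a_k\Big\|_1 \le 2\|a\|_1,
\]
since the partial sums telescope to $\mathcal{E}_n(a)-\mathcal{E}_0(a)$ and each $\mathcal{E}_n$ is an $L_1$-contraction. The crucial step is then the embedding $L_{1,\infty}(\mathcal{N})\hookrightarrow (L_p+L_2)(\mathcal{N})$, valid for any semifinite $\mathcal{N}$ and $0<p<1$: the pointwise bound $\mu_t(y)\le \|y\|_{L_{1,\infty}}/t$ substituted into Holmstedt's formula \eqref{Hol-1} gives
\[
\|y\|_{L_p+L_2}\lesssim \|y\|_{L_{1,\infty}}\Big[\Big(\int_0^1 t^{-p}\,dt\Big)^{1/p}+\Big(\int_1^\infty t^{-2}\,dt\Big)^{1/2}\Big],
\]
and both integrals converge precisely because $p<1$. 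Applying this with $\mathcal{N}=L_\infty(0,1)\bar{\otimes}\mathcal{M}$ converts the weak-type bound into $\|\sum_k r_k\otimes a_k\|_{(L_p+L_2)}\lesssim_p \|a\|_1$, and Lemma \ref{cotype lemma} then delivers $(\sum_k\|a_k\|_{(L_p+L_2)(\mathcal{M})}^2)^{1/2}\lesssim_p \|a\|_1$.

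For the $L_2$-piece, the standard orthogonality $\tau(b_j^*b_k)=0$ for $j\ne k$ (which follows by inserting $\mathcal{E}_{k-1}$ and using $\mathcal{E}_{k-1}(b_k)=0$) gives $\sum_k\|b_k\|_2^2\lesssim \|b\|_2^2$, while the trivial decomposition $b_k=0+b_k$ shows $\|b_k\|_{(L_p+L_2)(\mathcal{M})}\le \|b_k\|_2$. Hence $(\sum_k\|b_k\|_{(L_p+L_2)}^2)^{1/2}\lesssim \|b\|_2$. Combining the two bounds via the quasi-triangle inequality of $(L_p+L_2)(\mathcal{M})$ applied termwise in $k$ (costing only a $p$-dependent constant), followed by the elementary inequality $(u+v)^2\le 2(u^2+v^2)$, we obtain
\[
\Big(\sum_k\|x_k\|_{(L_p+L_2)(\mathcal{M})}^2\Big)^{1/2}\lesssim_p \|a\|_1+\|b\|_2\le (1+\varepsilon)\|x\|_{(L_1+L_2)(\mathcal{M})},
\]
and letting $\varepsilon\downarrow 0$ finishes the proof.

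The single delicate point, and the reason the statement is restricted to $p<1$, is the embedding $L_{1,\infty}\hookrightarrow (L_p+L_2)$: at $p=1$ the integral $\int_0^1 t^{-p}\,dt$ diverges, the embedding fails, and the weak-$(1,1)$ output of Lemma \ref{CR lemma} can no longer be upgraded into a norm bound in $(L_p+L_2)$. Everything else is a clean pairing of the noncommutative Khintchine machinery (Lemma \ref{cotype lemma}) on the $L_1$ side with the Hilbert-space identity on the $L_2$ side.
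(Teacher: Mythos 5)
Your proposal is correct and follows essentially the same route as the paper: split $x$ into an $L_1$- and an $L_2$-part, handle the $L_1$-part by combining the weak $(1,1)$ martingale bound of Lemma \ref{CR lemma} with the embedding $L_{1,\infty}\subset (L_p+L_2)$ (valid only for $p<1$) and the Khintchine lower bound of Lemma \ref{cotype lemma}, handle the $L_2$-part by orthogonality, and recombine via the quasi-triangle inequality. The only difference is cosmetic: you spell out the Holmstedt-based justification of the embedding $L_{1,\infty}\hookrightarrow (L_p+L_2)$ and the telescoping bound $\sup_n\|\sum_{k=1}^n a_k\|_1\le 2\|a\|_1$, which the paper leaves implicit.
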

\begin{proof}
Since $x\in (L_1+L_2)(\mathcal{M})$, there exist $y\in L_1(\mathcal{M})$ and $z\in L_2(\mathcal{M})$ such that $x=y+z$, $\|y\|_1\leq 2\|x\|_{(L_1+L_2)(\mathcal{M})}$ and $\|z\|_2\leq 2\|x\|_{(L_1+L_2)(\mathcal{M})}.$
For every $k\geq1$, set $y_k=\mathcal{E}_k(y)-\mathcal{E}_{k-1}(y)$ and $z_k=\mathcal{E}_k(z)-\mathcal{E}_{k-1}(z).$ Now, $x_k=y_k+z_k$. Since $L_p+L_2$ is a quasi-space for $0<p<1$, there is a constant $c_p>0$ such that
$$\|x_k\|_{L_p+L_2}\leq c_p(\|y_k\|_{L_p+L_2}+\|z_k\|_{L_p+L_2})\leq c_p(\|y_k\|_{L_p+L_2}+\|z_k\|_2).$$
Thus
$$c_p^{-1}\Big(\sum_{k\geq1}\|x_k\|_{L_p+L_2}^2\Big)^{\frac12}\leq \Big(\sum_{k\geq1}\|y_k\|_{L_p+L_2}^2\Big)^{\frac12}+\Big(\sum_{k\geq1}\|z_k\|_2^2\Big)^{\frac12}.$$
Observing that $0<p<1$, we infer that $L_{1,\infty}(0,\infty)\subset (L_p+L_2)(0,\infty)$. Then, by Lemma \ref{cotype lemma} and Lemma \ref{CR lemma}, we have
\begin{align*}
\Big(\sum_{k\geq1}\|y_k\|_{(L_p+L_2)(\mathcal{M})}^2\Big)^{\frac12}&\leq c_p\Big\|\sum_{k\geq1}r_k\otimes y_k\Big\|_{(L_p+L_2)(L_\infty(0,1)\bar{\otimes} \mathcal{M})}\\
&\leq c_pc_p'\Big\|\sum_{k\geq1}r_k\otimes y_k\Big\|_{L_{1,\infty}(L_\infty(0,1)\bar{\otimes} \mathcal{M})}\leq c_pc_p'\|y\|_1.
\end{align*}
Combining the above estimates, we obtain
$$c_p^{-1}\Big(\sum_{k\geq1}\|x_k\|_{(L_p+L_2)(\mathcal{M})}^2\Big)^{\frac12}\leq c_pc_p'\|y\|_1+\|z\|_2\leq 2(1+c_pc_p')\|x\|_{(L_1+L_2)(\mathcal{M})}.$$
The proof is complete.
\end{proof}

Now we are in a position to prove Theorem \ref{main theorem 1}.

\begin{proof}[Proof of Theorem \ref{main theorem 1}]
Fix $0<p<1$. Set
$$\delta=\inf\{\|x_n\|_{(L_p+L_2)(\mathcal{M})}:n\geq1\}.$$
We claim that $\delta>0$.
Assume that $\liminf_{n\to \infty}\|x_n\|_{(L_p+L_2)(\mathcal{M})}=0$. Then there exists a subsequence $(x_{n_k})_{k\geq1}$ which converges to zero in  $(L_p+L_2)(\mathcal{M})$, and consequently, $(x_{n_k})_{k\geq1}$ converges to zero in measure (see e.g. \cite[Lemma 2.4]{HLS2017} or \cite[Lemma 4.4]{DDP1989}). By the second assumption of the theorem, the martingale difference sequence $(x_n)_{n\geq1}$ is  $E$-equi-integrable in $E(\mathcal{M})$. Then it follows from Lemma \ref{e imply com} that $\lim_{k\to \infty}\|x_{n_k}\|_{E(\mathcal{M})}=0$, which contradicts our initial assumption. Hence $\delta>0$.
Choose a square-summable sequence $(\lambda_n)_{n\geq1}$ of scalars such that $\sum_{k\geq1}\lambda_nx_n\in E(\mathcal{M})$. Then we have
\begin{align*}\label{t1}
\delta \Big(\sum_{n\geq1}|\lambda_n|^2\Big)^{1/2}&\leq \Big(\sum_{n\geq1}\|\lambda_nx_n\|_{(L_p+L_2)(\mathcal{M})}^2\Big)^{1/2}.
\end{align*}
By Lemma \ref{martingale lemma}, we have
$$\Big(\sum_{n\geq1}|\lambda_n|^2\Big)^{1/2}\lesssim \Big\|\sum_{n\geq1}\lambda_nx_n\Big\|_{(L_1+L_2)(\mathcal{M})}\leq \Big\|\sum_{n\geq1}\lambda_nx_n\Big\|_{E(\mathcal{M})}.$$
This means the martingale difference sequence $(x_n)_{n\geq1}$ is $2$-co-lacunary  in $E(\mathcal{M})$ and the proof is complete.
\end{proof}

\noindent{\bf Acknowledgement.}
Most of the work was completed when the second author was visiting UNSW. He would like to express his gratitude to the School of Mathematics and Statistics of UNSW for its warm hospitality.
The authors would like to thank Dmitriy Zanin for useful communication.  The authors would  like to thank Thomas Scheckter for his careful reading and helpful comments that significantly improve the presentation of the paper. The authors would also like to thank the anonymous reviewer who suggested the current proofs of  Lemmas \ref{CR lemma} and  \ref{cor-2-concave} which are substantially shorter than their initial versions.

%
%\bibliographystyle{amsplain}
%\bibliography{NCZhou}

\begin{thebibliography}{10}

\bibitem{Ak1967}
C.~Akemann, \emph{The dual space of an operator algebra},  Trans. Amer. Math. Soc. \textbf{126} (1967) 286--302.

\bibitem{AlFr1982}
D.~Aldous and D.~Fremlin, \emph{Colacunary sequences in {$L$}-spaces}, Studia
  Math. \textbf{71} (1981/82), no.~3, 297--304.

\bibitem{AS2007}
S.~Astashkin and  F.~Sukochev,  \emph{ Banach-Saks property in Marcinkiewicz spaces,} J. Math. Anal. Appl. \textbf{336} (2007), no. 2, 1231--1258.

%\bibitem{ASZ2014}
%S.~Astashkin,  F.~Sukochev and D.~Zanin,  \emph{Disjointification inequalities in symmetric quasi-Banach spaces and their applications}, Pacific J. Math. \textbf{270} (2014), no.~2, 257--285.


\bibitem{Be1988}
C.~Bennett and R.~Sharpley, \emph{Interpolation of operators}, Pure and Applied
  Mathematics, vol. 129, Academic Press, Inc., Boston, MA, 1988.


\bibitem{Cai2018}
L.~Cadilhac,
\emph{Noncommutative {K}hintchine inequalities in
interpolation spaces of $L_p$-spaces}, Adv. Math. \textbf{352} (2019), 265--296.

\bibitem{CDS1997}
V.~Chilin, P.~Dodds and F.~Sukochev, \emph{The {K}adec-{K}lee property in symmetric spaces of measurable operators}, Israel J. Math. 97 (1997), 203--219.

%\bibitem{CDSS1996}
% V.~Chilin, P.~Dodds, A.~Sedaev and F.~Sukochev, \emph{Characterisations of {K}adec-{K}lee properties in symmetric spaces of measurable functions}, Trans. Amer. Math. Soc. 348 (1996), no. 12, 4895--4918.

\bibitem{CS1994}
V.~Chilin and  F.~Sukochev, \emph{Weak convergence in non-commutative symmetric spaces}, J. Operator Theory \textbf{31} (1994), no.~1, 35--65.

\bibitem{Connes1976}
A.~Connes, \emph{Classification of infective factors}, Annals of Math. \textbf{104} (1976), 73--115.

%\bibitem{Connes1979}
% A.~Connes,  \emph{On hyperfinite factors of type $III_0$ and Krieger's factors}, J. Functional Analysis \textbf{18} (1975), 318--327.

\bibitem{Dil2001}
S.~Dilworth,  \emph{Special Banach lattices and their applications}, Handbook of the geometry of Banach spaces, Vol. I, 497--532, North-Holland, Amsterdam, 2001.

\bibitem{DPPS2011}
S.~Dirksen, B.~de Pagter,  D.~Potapov and F.~Sukochev,
\emph{Rosenthal inequalities in noncommutative symmetric spaces},
J. Funct. Anal. \textbf{261} (2011), no. 10, 2890--2925.

\bibitem{DDP1989}
P.~Dodds, T.~Dodds and B. de Pagter, \emph{Non-commutative Banach function spaces}, Math. Z. \textbf{201} (1989), 583--597.

%\bibitem{DDP1991} Dodds P., Dodds T., de Pagter B. {\it Weakly compact subsets of symmetric operator spaces.} Math. Proc. Cambridge Philos. Soc. {\bf 110} (1991), no. 1, 169--182.

\bibitem{DDS2007}
P.~Dodds, T.~Dodds and F.~Sukochev, \emph{Banach-Saks properties in symmetric spaces of measurable operators}, Studia Math. \textbf{178} (2007), no.~2, 125--166.

\bibitem{DDS2014}
P.~Dodds, T.~Dodds and F.~Sukochev, \emph{On p-convexity and q-concavity in non-commutative symmetric spaces},  Integral Equations Operator Theory \textbf{78} (2014), no.~1, 91--114.

\bibitem{DoPa2014}
P.~Dodds and B.~de~Pagter, \emph{Normed {K}\"{o}the spaces: a non-commutative
  viewpoint}, Indag. Math. (N.S.) \textbf{25} (2014), no.~2, 206--249.


\bibitem{DoPaS2016}
P.~Dodds,  B.~de Pagter and Sukochev, \emph{Sets of uniformly absolutely continuous norm in symmetric spaces of measurable operators}, Trans. Amer. Math. Soc. \textbf{368} (2016), no. 6, 4315--4355.



%\bibitem{DoSu2001}
%P.~Dodds, F.~Sukochev, and G.~Schl\"{u}chtermann, \emph{Weak compactness
%  criteria in symmetric spaces of measurable operators}, Math. Proc. Cambridge
%  Philos. Soc. \textbf{131} (2001), no.~2, 363--384. \MR{1857125}

%\bibitem{Dor1982}
%L.~Dor, \emph{On co-lacunary sequences in {$L_{1}$}}, Bull. London Math. Soc.
%  \textbf{14} (1982), no.~5, 410--414. \MR{671783}

%\bibitem{DSZ2015}
%K.~Dykema, F.~Sukochev and  D.~Zanin, \emph{A decomposition theorem in $II_1$-factors}, J. Reine Angew. Math. \textbf{708} (2015), 97--114.

\bibitem{Fa1986}
T.~Fack and H.~Kosaki, \emph{Generalized {$s$}-numbers of {$\tau$}-measurable
  operators}, Pacific J. Math. \textbf{123} (1986), no.~2, 269--300.

\bibitem{HRS2003}
U.~Haagerup,  H.~Rosenthal and F.~Sukochev, \emph{Banach embedding properties of non-commutative $L_p$-spaces}, Mem. Amer. Math. Soc. \textbf{163} (2003), no. 776.

%\bibitem{HKT2006}
%C.~Hao, A.~Kaminska and N.~Tomczak-Jaegermann,
%\emph{Orlicz spaces with convexity or concavity constant one},
%J. Math. Anal. Appl. \textbf{320} (2006), no. 1, 303--321.

\bibitem{Holmstedt}
T.~Holmstedt, \emph{Interpolation of quasi-normed spaces}, Math. Scand.
  \textbf{26} (1970), 177--199.


\bibitem{HLS2017}
J.~Huang,  G.~Levitina and F.~Sukochev, \emph{Completeness of symmetric $\Delta$-normed spaces of $\tau$-measurable operators}, Studia Math. \textbf{237} (2017), no.~3, 201--219.


%\bibitem{JSZZ2017}
%Y.~Jiao, F.~Sukochev,  D.~Zanin and D.~Zhou, \emph{Johnson-{S}chechtman inequalities for noncommutative martingales}, J. Funct. Anal. \textbf{272} (2017), no.~3, 976--1016.



\bibitem{JSZ} M.~Junge, F.~Sukochev and D.~Zanin,  \emph{Embeddings of operator ideals into $L_p$-spaces on finite von Neumann algebras},  Adv. Math. \textbf{312} (2017), 473--546.


\bibitem{Kalton2008}
N.~Kalton and F.~Sukochev, \emph{Symmetric norms and spaces of operators}, J.
  Reine Angew. Math. \textbf{621} (2008), 81--121.

\bibitem{KPS1982}
S.~Krein,  T.~Petunin and E.~Semenov, \emph{Interpolation of linear operators},  Translations of Mathematical Monographs, 54. American Mathematical Society, Providence, R.I., 1982.

\bibitem{LiTz1979}
 J.~Lindenstrauss and  L.~Tzafriri, \emph{Classical {B}anach spaces. II. Function spaces},  Ergebnisse der Mathematik und ihrer Grenzgebiete, Vol. 97. Springer-Verlag, Berlin-New York, 1979.

\bibitem{LSZ2013}
S.~Lord, F.~Sukochev and D.~Zanin,
\emph{Singular traces.
Theory and applications},  De Gruyter Studies in Mathematics, 46. De Gruyter, Berlin, 2013.


\bibitem{Rand2006}
J.~Parcet and N.~Randrianantoanina, \emph{Gundy's decomposition for
  non-commutative martingales and applications}, Proc. London Math. Soc. (3)
  \textbf{93} (2006), no.~1, 227--252.

\bibitem{PX2003}
G.~Pisier and Q.~Xu, \emph{Non-commutative $L_p$-spaces}, in: Handbook of the Geometry
of Banach Spaces, Vol. 2, North-Holland, Amsterdam, 2003, 1459--1517.

\bibitem{Rabi1991}
F.~R\"{a}biger, \emph{Lower {$2$}-estimates for sequences in {B}anach
  lattices}, Proc. Amer. Math. Soc. \textbf{111} (1991), no.~1, 81--83.


\bibitem{Rand2002}
N.~Randrianantoanina, \emph{Non-commutative martingale transforms}, J. Funct.
  Anal. \textbf{194} (2002), no.~1, 181--212.

\bibitem{Rand2002JOT}
N.~Randrianantoanina, \emph{Sequences in non-commutative {$L^p$}-spaces}, J. Operator Theory
  \textbf{48} (2002), no.~2, 255--272.

\bibitem{Rand2003}
N.~Randrianantoanina, \emph{Embeddings of $\ell_p$ into non-commutative spaces}, J. Aust. Math. Soc. \textbf{74} (2003), no.~3, 331--350.


%\bibitem{Rand2004}
%N. Randrianantoanina, \emph{Square function inequalities for non-commutative martingales}, Israel J. Math. (3) \textbf{140} (2004),  333--365.

\bibitem{RX2003}
 Y.~Raynaud and Q.~Xu, \emph{ On subspaces of non-commutative $L_p$-spaces}, J. Funct. Anal. \textbf{203} (2003), no.~1, 149--196.
%
%\bibitem{Role1985}
%S.~Rolewicz, \emph{ Metric linear spaces}, Second edition.  Mathematics and its Applications, vol 20, Dordrecht, 1985.


\bibitem{SiSm2008}
A.~Sinclair and R.~Smith, \emph{Finite von {N}eumann algebras and masas},
  London Mathematical Society Lecture Note Series, vol. 351, Cambridge
  University Press, Cambridge, 2008.

\bibitem{Su1996}
  F.~Sukochev, \emph{Non-isomorphism of $L_p$-spaces associated with finite and infinite von Neumann algebras}, Proc. Amer. Math. Soc. \textbf{124} (1996), no. 5, 1517--1527.

%\bibitem{Su2000}
%F.~Sukochev, \emph{Linear-topological classification of separable $L_p$-spaces associated with von Neumann algebras of type I}, Israel J. Math. \textbf{115} (2000), 137--156.

\bibitem{Su2014}
 F.~Sukochev, \emph{Completeness of quasi-normed symmetric operator spaces}, Indag. Math. (N.S.) \textbf{25} (2014), no.~2, 376--388.

 \bibitem{SX2003}
F.~Sukochev and Q.~Xu, \emph{Embedding of non-commutative $L_p$-spaces: $p<1$}, Arch. Math. (Basel) \textbf{80} (2003), no.~2, 151--164.


\bibitem{Tak1979I}
 M.~Takesaki, \emph{Theory of operator algebras. I.} Reprint of the first (1979) edition. Encyclopaedia of Mathematical Sciences, 124. Operator Algebras and Non-commutative Geometry, 5. Springer-Verlag, Berlin, 2002.

%\bibitem{Tak2003III}
% M.~Takesaki, \emph{Theory of operator algebras. III.} Encyclopaedia of Mathematical Sciences, 127. Operator Algebras and Non-commutative Geometry, 8. Springer-Verlag, Berlin, 2003.

%\bibitem{Xu2007}
%Q.~Xu, \emph{Noncommutative ${L}_p$-spaces and martingale inequalities}, Book
%  manuscript (2007).

\end{thebibliography}

\end{document}